\numberwithin{equation}{section}
\def\listofalgorithms{\@starttoc{loa}\listalgorithmname}
\def\l@algorithm{\@tocline{0}{3pt plus2pt}{0pt}{1.9em}{}}
\renewcommand{\ALG@name}{Algorithm}
\renewcommand{\listalgorithmname}{List of \ALG@name s}
\numberwithin{algorithm}{section}
\theoremstyle{definition}
\theoremstyle{remark}
\newtheorem{rem}[algorithm]{Remark}
\theoremstyle{theorem}
\newtheorem{thm}[algorithm]{Theorem}
\theoremstyle{proposition}
\newtheorem{prop}[algorithm]{Proposition}
\theoremstyle{example}
\newtheorem{example}[algorithm]{Example}
\theoremstyle{corollary}
\begin{document}
\title[Generalized Hessians for optimal value functions]{Estimates of generalized Hessians for optimal value functions in mathematical programming}
\author[]{Alain B. Zemkoho\\[6ex] 
S\MakeLowercase{chool of} M\MakeLowercase{athematics}, U\MakeLowercase{niversity of} S\MakeLowercase{outhampton}, UK\\
\MakeLowercase{\textsf{a.b.zemkoho@soton.ac.uk}}\\\\
\emph{T}\MakeLowercase{\emph{his work is dedicated to}} \emph{P}\MakeLowercase{\emph{rofessor}} \emph{S}\MakeLowercase{\emph{tephan}} \emph{D}\MakeLowercase{\emph{empe on the occasion of his 65th birthday}}}

\thanks{This work is supported by an EPSRC grant {EP/V049038/1} and the Alan Turing Institute under the EPSRC grant {EP/N510129/1}.}%
\subjclass[2010]{90C31, 90C30}%
\keywords{Parametric optimization, optimal value function, generalized Hessian, second order subdifferential, sensitivity analysis, coderivative}%

\date{\today}%
\begin{abstract}
%
We consider the optimal value function of a parametric optimization problem. A large number of publications have been dedicated to the study of continuity and differentiability properties of the function. However, the differentiability aspect of works in the current literature has mostly been limited to first order analysis, with focus on estimates of its directional derivatives and subdifferentials, given that the function is typically nonsmooth. With the progress made in the last two to three decades in major subfields of optimization such as robust, minmax, semi-infinite and bilevel optimization, and their connection to the optimal value function, there is a need for a second order analysis of the generalized differentiability properties of this function. This could enable the development of robust solution algorithms, such as the Newton method. The main goal of this paper is to provide  estimates of the generalized Hessian for the optimal value function. Our results are based on two handy tools from parametric optimization, namely the optimal solution and Lagrange multiplier mappings, for which completely detailed estimates of their generalized derivatives are either well-known or can easily be obtained.
\end{abstract}
\maketitle
\section{Introduction}
\subsection{Aim of the  work} Considering the functions $f$ and $g$ defined from $\mathbb{R}^{n+m}$ to $\mathbb{R}$ and $\mathbb{R}^p$, respectively, we are interested in the parametric optimization problem
\begin{equation}\label{ParametricOptimization}
   \underset{y}\min~\left\{f(x,y)|\; g(x,y)\leq 0 \right\}.
\end{equation}
We only consider inequality constraints, in order to focus our attention on the main ideas. Note however that all the results in this paper remain valid, with the corresponding adjustments, if we include equality constraints to problem \eqref{ParametricOptimization}.
Our focus will be on the optimal value function
\begin{equation}\label{OptValueFn}
   \varphi(x):= \underset{y}\min~\left\{f(x,y)|\; g(x,y)\leq 0 \right\},
\end{equation}
related to the parametric optimization problem \eqref{ParametricOptimization}. Another object closely related to problem \eqref{ParametricOptimization} is the optimal solution set-valued mapping $S : \mathbb{R}^n \rightrightarrows \mathbb{R}^m$ defined by
\begin{equation}\label{S(x)}
S(x):= \arg\underset{y}\min~\left\{f(x,y)|\; g(x,y)\leq 0 \right\}.
\end{equation}
We assume throughout the paper that $S(x)\neq \emptyset$ for all $x\in \mathbb{R}^n$. As a consequence, $\varphi$ will be finite-valued  at all $x\in \mathbb{R}^n$. For most results obtained in this paper, we can easily accommodate the case where $\varphi$ \eqref{OptValueFn} is an extended real-valued function. But to concentrate on the main points, we leave this specific case for future analysis.

The function $\varphi$ \eqref{OptValueFn} has played a major role in the development and understanding of the structure of the underlying parametric optimization problem \eqref{ParametricOptimization}, and has been substantially analyzed in the literature. Initial work on stability/sensitivity analysis of optimization problems is almost as old as the field of optimization itself, given that early works on linear programming and the simplex method already provided interesting insights on the behavior of optimal values under perturbations; see, example, the 12th Chapter of the 1963 book by Dantzig \cite{Dantzig1963}.

The study of continuity and differential properties of $\varphi$, in the context of nonlinear optimization, which is our main focus here, grew dramatically following works by Fiacco \cite{FiaccoBook1983}, Gauvin and Dubeau \cite{GauvinDubeau1982}, amongst many others. Recent publications on the topic include the papers \cite{MordukhovichNamPhanVarAnalMargBlP,MordukhovichNamYenSubgradients2009}, where the tools by Mordukhovich are used to provide different types of subdifferential estimates for $\varphi$.

It should be emphasized that most of the aforementioned works focus on the derivation of continuity properties and the estimation of directional derivatives and subdifferentials. As far as second order differentiation properties for $\varphi$ are concerned, a few publications have been devoted to the estimation of second order-type directional derivatives; see, e.g., \cite{Shapiro1985second, Shapiro1988perturbation}. We are however not aware of any work pursuing generalized Hessian evaluations for $\varphi$, in the sense considered in this paper; cf. \eqref{Definition of Hessian phi}. Fiacco \cite[Chapter 3]{FiaccoBook1983} provides Hessian formulas for $\varphi$ in the case where $S$ \eqref{S(x)} is single-valued and continuously differentiable. This assumption is very restrictive and cannot hold for most applications. However, inducing second order sufficient conditions for approximating
problems can give rise to efficient smoothing methods for many problems involving nonsmooth functions  \cite{borges2020regularized, borges2021decomposition}.

 We consider various scenarios in this paper, including the case where $S$ \eqref{S(x)} is single-valued and continuously differentiable, leading to  results that coincide with those by Fiacco just mentioned above; see details in the next subsection and in Section \ref{Generalized Hessian estimates}. In \cite{ButikoferKlatteKummer2010Second}, generalized Taylor expansion and other second order generalized differentiation tools are applied to a function closely related to $\varphi$; but their result are completely different from the ones obtained here.

%
%

 It is also important to recall that the optimal value function naturally appears either in the constraints or objective functions of many mainstream optimization problems, including \emph{robust} \cite{ben2009robust}, \emph{minmax} \cite{DanskinBook1967}, \emph{generalized semi-infinite} \cite{RuckmannShapiro2001}, and \emph{bilevel optimization} \cite{DempeFoundations, Zemkoho2016Solving}. Considering the recent developments in the construction of Newton-type methods using the Mordukhovich generalized second order differentiation tools (see \cite{khanh2020generalized, mordukhovich2020generalized, mordukhovich2020globally}), the results of this paper could play an important role in solving the aforementioned problems.

Our main goal in this paper is to provide estimates of the Generalized Hessian of the optimal value function $\varphi$, in the sense of Mordukhovich. Note that for $\bar x\in \mathbb{R}^n$ and $\underline{x}\in \partial\varphi(\bar x)$, the generalized Hessian of $\varphi$ in the sense of Mordukhovich is defined by
\begin{equation}\label{Definition of Hessian phi}
\partial^2 \varphi(\bar x| \underline{x})(\underline{x}^*) := D^*(\partial\varphi)(\bar x| \underline{x})(\underline{x}^*)
\end{equation}
for all $\underline{x}^*\in \mathbb{R}^n$. Here, $\partial\varphi$ denotes the subdifferential of $\varphi$ in any standard sense (convex analysis, Clarke or Mordukhovich) and $D^*(\partial\varphi)$ stands for the coderivative of Mordukhovich \cite{MordukhovichBook2006}. In the next subsection, we provide a general flavour of how the estimates developed in this paper look like. Further details on the mathematical tools and the proofs of the main results are provided in Sections \ref{Mathematical tools needed}--\ref{Generalized Hessian estimates in the absence of convexity}. 

%

\subsection{Summary of the main results--outline of the paper}\label{Main results and outline of paper} We assume throughout the paper that the functions $f$ and $g$, cf. \eqref{ParametricOptimization}, are twice continuously differentiable.
If $S$ \eqref{S(x)} reduces to a single-valued function around a given point, the notation symbol will be adjusted to $s$. We use the set-valued map $\Lambda :\mathbb{R}^{n+m} \rightrightarrows \mathbb{R}^p$ to collect all the \emph{Lagrange multipliers} of problem \eqref{ParametricOptimization}; i.e.,
\begin{equation}\label{Lambdadefined}
\Lambda(x,y) := \left\{u\in \mathbb{R}^p|\, \nabla_yL(x,y,u)=0, \, u\geq 0, \, g(x,y)\leq 0, \, u^\top g(x,y)=0\right\},
\end{equation}
 where $L(x,y,u):=f(x,y) + u^\top g(x,y)$ denotes the \emph{Lagrangian function} of  problem \eqref{ParametricOptimization}.
Similarly, $\lambda$ will be used to represent $\Lambda$ when it is single-valued.

If the constraint function $g$ is independent from the parameter $x$ and the corresponding optimal solution set-valued map $S$ is single-valued and locally Lipschitz continuous around $\bar x$, then we show in Subsections \ref{Case where the feasible set is unperturbed}--\ref{Single-valued solution and multipliers maps} that under further appropriate conditions, the generalized Hessian of $\varphi$ \eqref{OptValueFn} can be obtained as
\begin{equation}\label{man-no-lap1}
   \partial^2 \varphi(\bar x)(x^*) \subseteq \nabla^2_{xx}f(\bar x, \bar y) x^* + \bar\partial s(\bar x)^\top \nabla^2_{xy}f(\bar x, \bar y) x^*,
\end{equation}
where $\bar y= s(\bar x)$ and $\bar\partial s(\bar x)^\top$ stands for the set of transposed generalized Jacobians of $s$ in the sense of Clarke, cf. \eqref{Clarke Subifferential}. If we further impose the continuous differentiability on $s$ at the point $\bar x$, then we can obtain the equality
\begin{equation}\label{man-no-lap2}
   \partial^2 \varphi(\bar x)(x^*) = \nabla^2_{xx}f(\bar x, \bar y) x^* + \nabla s(\bar x)^\top \nabla^2_{xy}f(\bar x, \bar y) x^*,
\end{equation}
which coincides with the result in \cite[Corollary 3.4.2(c)]{FiaccoBook1983}.
 Conditions ensuring that $S$ is single-valued and locally Lipschitz continuous or continuously differentiable are recalled in Section \ref{On the subdifferential}. The results above can further be generalized to the case where the optimal solution mapping $S$ \eqref{S(x)} is multi-valued. Precisely, under appropriate conditions, we also obtain in Subsections \ref{Case where the feasible set is unperturbed}--\ref{Single-valued solution and multipliers maps} that
\begin{equation}\label{man-no-lap3}
   \partial^2 \varphi(\bar x| \underline{x})(\underline{x}^*) \subseteq \underset{y\in S(\bar x): \;\, \underline{x}=\nabla_x f(\bar x, y)}\bigcup\left[\nabla^2_{xx}f(\bar x, y)\underline{x}^*  + D^*S(\bar x| y)\left(\nabla^2_{xy}f(\bar x, y)\underline{x}^*\right)
  \right].
\end{equation}
If we drop the assumption that the feasible set of the parametric optimization problem \eqref{ParametricOptimization} is unperturbed, we can obtain the following estimate for the generalized Hessian of $\varphi$, provided that $S$ \eqref{S(x)} and $\Lambda$ \eqref{Lambdadefined} are both single-valued and Lipschitz continuous around $\bar x$ and $(\bar x, \bar y)$, respectively,  with $\bar y=s(\bar x)$ and $\bar u = \lambda(\bar x, \bar y)$:
\begin{equation}\label{nemama}
   \begin{array}{l}
\partial^2 \varphi(\bar x| \underline{x})(\underline{x}^*) \subseteq \nabla^2_{xx}L(\bar x, \bar y, \bar u)\underline{x}^* \\[1ex]
\qquad\qquad  + \quad \underset{(\zeta^*_x, \zeta_y)\in \partial \langle \nabla_x g(\bar x, \bar y)\underline{x}^*, \;\,\lambda \rangle (\bar x, \bar y)} \bigcup \left[ \zeta^*_x +  \partial \langle \zeta^*_y + \nabla^2_{xy}L(\bar x, \bar y, \bar u)\underline{x}^*, \;\,s \rangle (\bar x)\right].
   \end{array}
\end{equation}
Here, the symbol $\partial$ refers to the subdifferential, in the sense of Mordukhovich, cf. \eqref{Basic Subdifferential}. Obviously, as in the case of \eqref{man-no-lap2}, supposing that $\lambda$ and $s$ are both single-valued and differentiable functions, we will have
\begin{equation}\label{mamba}
   \begin{array}{l}
\partial^2 \varphi(\bar x| \underline{x})(\underline{x}^*) = \nabla^2_{xx}L(\bar x, \bar y, \bar u)\underline{x}^* + \nabla s(\bar x)^\top\nabla^2_{xy}L(\bar x, \bar y, \bar u)\underline{x}^* \\
\qquad\qquad\qquad \quad+  \left[\nabla_x \lambda(\bar x, \bar y)^\top +  \nabla s(\bar x)^\top \nabla_y \lambda(\bar x, \bar y)^\top \right] \nabla_x g(\bar x, \bar y)\underline{x}^*.
   \end{array}
\end{equation}
Observing that $\nabla_x \lambda(\bar x, \bar y)^\top +  \nabla s(\bar x)^\top \nabla_y \lambda(\bar x, \bar y)^\top $ corresponds to the Jacobian of the function $x \mapsto \lambda (x, s(x))$, it is clear that \eqref{mamba} coincides with the formula obtained in \cite[Corollary 3.4.1(c)]{FiaccoBook1983}. More details on how the estimate in  \eqref{nemama} is obtained are given in Subsection \ref{Single-valued solution and multipliers maps}. For their generalizations to the case where $S$ \eqref{S(x)} is single-valued and $\Lambda$ \eqref{Lambdadefined} is set-valued, see Subsection \ref{Single-valued optimal solution map}. As for more general results, in the absence of convexity, see Subsection \ref{Set-valued optimal solution map}.  Before moving to that, we first provide some background results, which are useful in their own right, in particular, in the understanding of the subdifferential of $\varphi$ \eqref{OptValueFn} and the \emph{Lipschitz-likeness}, and \emph{generalized differentiability} properties of the related mappings $S$ \eqref{S(x)} and $\Lambda$ \eqref{Lambdadefined}; cf. Section \ref{On the subdifferential}.

 Some final comments on limitations and potential applications of the results from this paper, as well as topics for future research, are discussed in Section \ref{Final discussion and future work}.

\section{Notation and mathematical tools needed}\label{Mathematical tools needed}
We start this section with some notation and basic concepts used throughout the paper. We will use $v_j$, $j=1, \ldots, n$, to denote the $j$th component  of a vector $v\in \mathbb{R}^n$, while $v^i\in \mathbb{R}^n$, $i=1, \ldots, m$, will represent the $i$th vector component  of a vector of vectors $v\in \prod^m_{i=1}\mathbb{R}^n$. Furthermore, for $v\in \mathbb{R}^n$ and $I \subseteq \{1, \ldots, n\}$, writing $v_I$ will refer to all components $v_i$ of $v$ for which $i\in I$. To avoid confusions at some points, we will use $\{0_n\}$ or $0_n$ for a $n$-dimensional zero vector. When a distinction is also necessary, we will use $I_n$ for the $n\times n$ identity matrix. We use the notation $\Psi : \mathbb{R}^n \rightrightarrows \mathbb{R}^m$ for a set-valued mapping and the \emph{lower case} form $\psi : \mathbb{R}^n \rightarrow \mathbb{R}^m$ to symbolize a single-valued mapping. Most notably, as already mentioned in the previous section, such transitions from \emph{upper} to \emph{lower case} will be used for the optimal solution and Lagrange multipliers set-valued mappings $S$ \eqref{S(x)}  and $\Lambda$ \eqref{Lambdadefined}, in the forms $s$ and $\lambda$, respectively. Additionally, for a set-valued mapping $\Psi : \mathbb{R}^n \rightrightarrows \mathbb{R}^m$, it will be said to be closed if its graph denoted by $\mbox{gph }\Psi:=\{(x,y)\in \mathbb{R}^{n}\times \mathbb{R}^{m}|\; y\in \Psi(x)\}$ is a closed subset of $\mathbb{R}^{n}\times \mathbb{R}^{m}$. Also recall that for a set $C\subseteq \mathbb{R}^n$, $\mbox{co }C$ will be used to denote the convex hull of $C$.

For a closed subset $C$ of $\mathbb{R}^n$, the \emph{Mordukhovich} (also known as basic or limiting) \emph{normal cone} to $C$ at one of its points $\bar x$ is the set (see, e.g., \cite{MordukhovichBook2006})
\begin{equation}\label{basic normal cone}
 N_C(\bar x):= \left\{v\in \mathbb{R}^n|\; \exists v_k \rightarrow v, \, x_k \rightarrow \bar x \,(x_k\in C): \, v_k\in \widehat{N}_C(x_k)\right\},
\end{equation}
where $\widehat{N}_C$ denotes the dual of the contingent/Boulingand tangent cone to $C$:
$$
 \widehat{N}_C(\bar x):=\left\{v\in \mathbb{R}^n|\; \langle v, u-\bar x\rangle \leq o(\|u-\bar x\|)\;\; \forall u\in C\right\}.
$$
We have the following well-known result, which can be found in \cite{MordukhovichGeneralizedDifferential1994,RockafellarWetsBook1998}.
\begin{thm}\label{normal cone estimate}{\em
Let $C:=\psi^{-1}(\Xi)$, where $\Xi\subseteq \mathbb{R}^m$ is a closed set and $\psi :\mathbb{R}^n \rightarrow \mathbb{R}^m$ a Lipschitz continuous function around $\bar x$, then we have
\begin{equation}\label{normal cone to operator constraint}
   N_C(\bar x) \subseteq     \bigcup\left\{\partial \langle v, \psi \rangle (\bar x)\big|\; v\in N_\Xi(\psi(\bar x)) \right\},
\end{equation}
provided the following basic-type qualification condition is satisfied at $\bar x$:
\begin{equation}\label{Basic CQ2}
    \left[0\in  \partial \langle v, \psi \rangle (\bar{x}), \; v\in   N_{\Xi}(\psi(\bar{x}))\right] \Longrightarrow v=0.
\end{equation}
}\end{thm}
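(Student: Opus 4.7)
The plan is to reduce the claim to Mordukhovich's subdifferential chain rule for compositions with Lipschitzian inner mappings, applied to the indicator of $\Xi$ composed with $\psi$.

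First, I would observe that since $C=\psi^{-1}(\Xi)$, the indicator functions factor as $\delta_C=\delta_\Xi\circ\psi$. Using the well-known identities $N_C(\bar x)=\partial\delta_C(\bar x)$ and $N_\Xi(\psi(\bar x))=\partial\delta_\Xi(\psi(\bar x))$, the inclusion to be proved becomes
\begin{equation*}
\partial(\delta_\Xi\circ\psi)(\bar x)\;\subseteq\;\bigcup\bigl\{\partial\langle v,\psi\rangle(\bar x)\,\bigm|\,v\in\partial\delta_\Xi(\psi(\bar x))\bigr\}.
\end{equation*}
This is precisely the conclusion of the Mordukhovich chain rule for the composition of a lower semicontinuous function (here $\delta_\Xi$) with a locally Lipschitzian mapping (here $\psi$), as recorded in \cite{MordukhovichBook2006,RockafellarWetsBook1998}.

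Second, I would verify that the qualification hypothesis required by that chain rule matches \eqref{Basic CQ2}. The standard hypothesis reads
\begin{equation*}
\bigl[0\in\partial\langle v,\psi\rangle(\bar x),\; v\in\partial^{\infty}\delta_\Xi(\psi(\bar x))\bigr]\;\Longrightarrow\; v=0,
\end{equation*}
where $\partial^{\infty}$ denotes the singular (horizon) subdifferential. For an indicator function one has the crucial identity $\partial^{\infty}\delta_\Xi=\partial\delta_\Xi=N_\Xi$, so the qualification condition collapses exactly to \eqref{Basic CQ2}. Once this identification is made, the inclusion follows directly by invoking the chain rule.

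The main obstacle, in my view, is technical rather than conceptual: one must cite a version of the chain rule that simultaneously allows the inner function to be only Lipschitz (not smooth) and the outer function to be only lower semicontinuous. The nontrivial ingredient hidden inside that chain rule is a limiting argument combining a fuzzy sum rule for basic subdifferentials with the Lipschitz estimate $|\delta_\Xi(\psi(x))-\delta_\Xi(\psi(\bar x))|$-type control near $\bar x$; but since the excerpt permits us to appeal to this result, the present proof reduces to a one-step application after recognizing the correct composition structure.
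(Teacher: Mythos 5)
Your argument is correct: writing $\delta_C=\delta_\Xi\circ\psi$, using $N_C(\bar x)=\partial\delta_C(\bar x)$ and $\partial^{\infty}\delta_\Xi=N_\Xi$, and invoking the Mordukhovich chain rule for an l.s.c.\ outer function composed with a locally Lipschitzian inner mapping (together with the scalarization $D^*\psi(\bar x)(v)=\partial\langle v,\psi\rangle(\bar x)$) does yield exactly \eqref{normal cone to operator constraint} under \eqref{Basic CQ2}. The paper itself gives no proof --- it cites the statement as a known result from the references --- and your derivation is precisely the standard argument by which it is established there, so there is nothing to flag.
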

Equality holds in \eqref{normal cone to operator constraint}, provided that the set $\Xi$ is normally regular at $\psi(\bar x)$, i.e., $N_{\Xi}(\psi(\bar{x}))=\widehat{N}_{\Xi}(\psi(\bar{x}))$. This is obviously the case if $\Xi$ is a convex set.

In \eqref{normal cone to operator constraint} and \eqref{Basic CQ2}, the term $\partial \langle v, \psi \rangle (\bar x)$ refers to the Mordukhovich subdifferential of the function $x \mapsto \sum^m_{i=1} v_i \psi_i(x)$ at $\bar x$.
If $\psi :\mathbb{R}^n \rightarrow \mathbb{R}$, then the \emph{Mordukhovich} (also known as basic or limiting) \emph{subdifferential} of $\psi$ at $\bar x$ can be defined by
\begin{equation}\label{Basic Subdifferential}
\partial \psi (\bar x):=\left\{\xi\in \mathbb{R}^n|\, (\xi, -1)\in N_{\text{epi}\psi}(\bar x, \psi(\bar x))\right\},
\end{equation}
where $\text{epi}\psi$ stands for the epigraph of $\psi$. If $\psi$ is Lipschitz continuous around $\bar x$, then we can also define the \emph{Clarke} (or convexified) \emph{subdifferential} of $\psi$ at $\bar x$:
\begin{equation}\label{Clarke Subifferential}
\Bar{\partial} \psi (\bar{x}) := \text{co}\, \partial \psi(\bar{x}).
\end{equation}
In the case where $\psi$ is convex,  $\partial \psi (\bar{x})$ and $\Bar{\partial} \psi (\bar{x})$ coincide with the subdifferential in the sense of convex analysis.

Using the above concept of basic normal cone, we now  introduce the notion of {\em coderivative} for a given set-valued map $\Psi :\mathbb{R}^n \rightrightarrows \mathbb{R}^m$,  at some point $(\bar{x}, \bar{y})\in \text{gph}\, \Psi$, which corresponds to a homogeneous mapping $D^*\Psi(\bar{x}| \bar{y}): \mathbb{R}^m \rightrightarrows \mathbb{R}^n$, defined by
\begin{equation}\label{cod-definition}
   D^*\Psi(\bar{x}|\bar{y})(y^*):= \left\{x^*\in \mathbb{R}^n| (x^*, -y^*)\in N_{\text{gph}\, \Psi}(\bar{x}, \bar{y})\right\},
\end{equation}
for all $y^*\in \mathbb{R}^m$. Here, 
$N_{\text{gph}\, \Psi}$ represents the basic normal cone \eqref{basic normal cone} to $\text{gph}\,\Psi$.
The following chain rule from \cite[Theorem 5.1]{MordukhovichGeneralizedDifferential1994} will be pivotal in this work.
\begin{thm}\label{chain rule}{\em
Let the set-valued mappings $\Phi : \mathbb{R}^n\rightrightarrows\mathbb{R}^m$ and $F : \mathbb{R}^m\rightrightarrows\mathbb{R}^q$ have closed graph. Furthermore, let $\bar z\in (F \circ \Phi)(\bar x)$ and assume that the set-valued map
\begin{equation}\label{M(xz)}
    M(x,z):=\Phi(x)\cap F^{-1}(z) = \left\{y\in \Phi(x)|\; z\in F(y)  \right\}
\end{equation}
is locally bounded around $(\bar x, \bar z)$ and the qualification condition
\begin{equation}\label{papasal}
D^*F(y|\bar z)(0)\cap \mbox{Ker }D^*\Phi(\bar x|y) = \{0\} \;\; \forall y\in \Phi(\bar x) \cap F^{-1}(\bar z)
\end{equation}
is fulfilled. Then for all $z^*\in \mathbb{R}^q$, we have
$$
D^*(F\circ\Phi)(\bar x| \bar z)(z^*) \subseteq \underset{y\in \Phi(\bar x)\cap F^{-1}(\bar z)}\bigcup D^*\Phi(\bar x| y)(z^*)\circ D^*F(y| \bar z)(z^*).
$$
}\end{thm}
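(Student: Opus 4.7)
The plan is to reduce the statement to two classical pieces of basic-normal-cone calculus: the formula for the normal cone to the image of a closed set under a linear projection that is proper on it, and the sum rule for basic normal cones at an intersection of two closed sets, and then to translate the conclusion back into coderivative language via the definition \eqref{cod-definition}.

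Concretely, I would first rewrite the composite graph as a projected intersection. Setting
$$\Omega:=\bigl(\mathrm{gph}\,\Phi\times\mathbb{R}^q\bigr)\cap\bigl(\mathbb{R}^n\times\mathrm{gph}\,F\bigr)\subseteq\mathbb{R}^{n+m+q}$$
and letting $h:\mathbb{R}^{n+m+q}\to\mathbb{R}^{n+q}$, $h(x,y,z)=(x,z)$, one has $\mathrm{gph}(F\circ\Phi)=h(\Omega)$ and $h^{-1}(\bar x,\bar z)\cap\Omega=\{\bar x\}\times M(\bar x,\bar z)\times\{\bar z\}$. Local boundedness of $M$ is exactly what makes $h$ proper on $\Omega$ near $(\bar x,\bar z)$, which in particular guarantees that $\mathrm{gph}(F\circ\Phi)$ is locally closed there. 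Starting from $x^*\in D^*(F\circ\Phi)(\bar x|\bar z)(z^*)$, equivalently $(x^*,-z^*)\in N_{\mathrm{gph}(F\circ\Phi)}(\bar x,\bar z)$, the sequential definition \eqref{basic normal cone} produces $(x_k,z_k)\to(\bar x,\bar z)$ with Fréchet normals $(x_k^*,-z_k^*)\to(x^*,-z^*)$; choosing $y_k\in M(x_k,z_k)$ and invoking local boundedness plus closedness of both graphs extracts $y_k\to\bar y\in\Phi(\bar x)\cap F^{-1}(\bar z)$. Since $h$ is a $1$-Lipschitz linear surjection, each lifted triple $(x_k^*,0,-z_k^*)$ lies in $\widehat N_\Omega(x_k,y_k,z_k)$, and passing to the limit yields $(x^*,0,-z^*)\in N_\Omega(\bar x,\bar y,\bar z)$.

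It then remains to apply the sum rule to the two ingredients $A_1:=\mathrm{gph}\,\Phi\times\mathbb{R}^q$ and $A_2:=\mathbb{R}^n\times\mathrm{gph}\,F$, whose normal cones decouple as $N_{A_1}=N_{\mathrm{gph}\,\Phi}(\bar x,\bar y)\times\{0\}$ and $N_{A_2}=\{0\}\times N_{\mathrm{gph}\,F}(\bar y,\bar z)$. A short sign computation based on \eqref{cod-definition} shows that the abstract dual qualification $N_{A_1}\cap(-N_{A_2})=\{0\}$ is exactly the hypothesis $D^*F(\bar y|\bar z)(0)\cap\mathrm{Ker}\,D^*\Phi(\bar x|\bar y)=\{0\}$ assumed in the theorem (with the second $F$ in the statement being a typo for $\Phi$). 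Under this the sum rule yields $N_\Omega\subseteq N_{A_1}+N_{A_2}$; decomposing $(x^*,0,-z^*)$ accordingly and letting $y^*$ denote the shared middle multiplier produces $y^*\in D^*F(\bar y|\bar z)(z^*)$ together with $x^*\in D^*\Phi(\bar x|\bar y)(y^*)$, which is the claimed inclusion after taking the union over admissible $\bar y$.

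I expect the main obstacle to be the bookkeeping around sign conventions: one must check carefully that the $(\,\cdot\,,-\,\cdot\,)$ convention from \eqref{cod-definition} converts the abstract statement $N_{A_1}\cap(-N_{A_2})=\{0\}$ precisely into the combination $D^*F(\bar y|\bar z)(0)\cap\mathrm{Ker}\,D^*\Phi(\bar x|\bar y)=\{0\}$ rather than a reflected or shifted variant, and to confirm that the intersection rule is the appropriate instance (not, for instance, an inclusion with closure). Everything else is mechanical once local boundedness has delivered properness of $h$ on $\Omega$ and the resulting Fréchet-to-basic limiting argument.
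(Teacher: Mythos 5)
Your argument is correct. The paper does not actually prove Theorem \ref{chain rule} --- it imports it verbatim from Theorem 5.1 of the cited 1994 Mordukhovich paper --- and your projection-plus-intersection argument is precisely the standard proof of that cited result: the lifting of Fr\'echet normals through the proper projection $h$, the limiting passage using local boundedness of $M$ together with closedness of the two graphs, and the normal-cone intersection rule under the qualification $N_{A_1}\cap(-N_{A_2})=\{0\}$ all check out, as does your sign computation identifying that qualification with \eqref{papasal}. You are also right that $\mbox{Ker }D^*F(\bar x|y)$ in \eqref{papasal} is a typo for $\mbox{Ker }D^*\Phi(\bar x|y)$ (and, likewise, $z\in F(x)$ in \eqref{M(xz)} should read $z\in F(y)$).
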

The following result from \cite[Proposition 3.3]{DempeZemkohoKKT-SIAM-paper2}, providing a coderivative estimate for a Cartesian product of finitely many set-valued mappings, will also be useful in the development of the main results of this paper.
\begin{thm}\label{pp1}{\em
Consider the set-valued mappings $\Psi_i : \mathbb{R}^n \rightrightarrows \mathbb{R}^q$ for $i=1, \ldots, p$, and define a Cartesian product mapping $\Psi :\mathbb{R}^n \rightrightarrows \mathbb{R}^{q\times p}$ by
\begin{equation*}\label{Cart-Prod}
   \Psi(x):= \prod^p_{i=1} \Psi_i(x) = \Psi_1(x)\times \ldots \times \Psi_p(x).
\end{equation*}
Assume that $\text{gph}\,\Psi_i$, $i=1, \ldots, p$, is closed and the qualification condition
\begin{equation}\label{QC-Cod}
   \left[\sum^p_{i=1}v^i=0, \; v^i\in D^*\Psi_i(\bar x| \bar{y}^i)(0), \; i=1, \ldots, p\right] \Longrightarrow v^1=\ldots = v^p=0
\end{equation}
 is satisfied at $(\bar x, \bar y)$ with $\bar{y}:=(\bar{y}^i)^p_{i=1}\in \Psi(\bar{x})$. Then, for any $y^* :=(y^{*i})^p_{i=1}\in \prod^p_{i=1}\mathbb{R}^q$,
\begin{equation}\label{Cod-Prod}
    D^*\Psi(\bar x| \bar y)(y^*) \subseteq \sum^p_{i=1} D^*\Psi_i(\bar x| \bar{y}^i)(y^{*i}).
\end{equation}
Equality holds in \eqref{Cod-Prod}, if $\text{gph}\, \Psi_i$ is normally regular at $(\bar x, \bar{y}^i)$, for $i=1, \ldots, p$.
}\end{thm}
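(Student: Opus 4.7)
The plan is to realize the graph of the Cartesian product mapping $\Psi$ as the preimage of an explicit product set under a linear ``diagonal'' map, and then apply Theorem \ref{normal cone estimate} together with the standard product formula for the Mordukhovich normal cone. Concretely, I would define the linear operator $\phi : \mathbb{R}^n \times \mathbb{R}^{q\times p} \to (\mathbb{R}^n\times \mathbb{R}^q)^p$ by $\phi(x, y^1,\ldots, y^p) := (x, y^1, x, y^2, \ldots, x, y^p)$ and set $\Xi := \prod^p_{i=1} \text{gph}\,\Psi_i$. By assumption each $\text{gph}\,\Psi_i$ is closed, hence so is $\Xi$, and one has the key identity $\text{gph}\,\Psi = \phi^{-1}(\Xi)$.

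Next I would apply Theorem \ref{normal cone estimate} with $\psi := \phi$. Since $\phi$ is linear, $\partial \langle v, \phi\rangle(\bar x, \bar y)$ collapses to the singleton consisting of the adjoint action $\phi^*(v)$; writing $v = (v^1, \ldots, v^p)$ with $v^i = (v^i_x, v^i_y) \in \mathbb{R}^n \times \mathbb{R}^q$, one computes
\begin{equation*}
   \phi^*(v) \;=\; \Bigl(\textstyle\sum^p_{i=1} v^i_x,\; v^1_y,\; \ldots,\; v^p_y\Bigr).
\end{equation*}
Using the (by now classical) product rule $N_\Xi(\phi(\bar x, \bar y)) = \prod^p_{i=1} N_{\text{gph}\,\Psi_i}(\bar x, \bar y^i)$ for Mordukhovich normal cones to a Cartesian product of closed sets, the basic qualification condition \eqref{Basic CQ2} becomes: $[\sum^p_{i=1} v^i_x = 0, \; v^i_y = 0, \; (v^i_x, v^i_y) \in N_{\text{gph}\,\Psi_i}(\bar x, \bar y^i)] \Rightarrow v^i = 0$. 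Since $v^i_y = 0$ converts the membership $(v^i_x, 0) \in N_{\text{gph}\,\Psi_i}(\bar x, \bar y^i)$ precisely into $v^i_x \in D^*\Psi_i(\bar x|\bar y^i)(0)$, this is exactly the stipulated hypothesis \eqref{QC-Cod}.

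With the normal cone inclusion in hand, I would translate it into a coderivative inclusion by going through the definition \eqref{cod-definition}. For $x^* \in D^*\Psi(\bar x|\bar y)(y^*)$ with $y^* = (y^{*1}, \ldots, y^{*p})$, one has $(x^*, -y^*) \in N_{\text{gph}\,\Psi}(\bar x, \bar y)$, and matching the components against $\phi^*(v)$ forces $v^i_y = -y^{*i}$, hence $v^i_x \in D^*\Psi_i(\bar x|\bar y^i)(y^{*i})$ and $x^* = \sum^p_{i=1} v^i_x \in \sum^p_{i=1} D^*\Psi_i(\bar x|\bar y^i)(y^{*i})$, which yields \eqref{Cod-Prod}. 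For the equality statement, I would invoke the fact that a product of normally regular sets is normally regular, so normal regularity of each $\text{gph}\,\Psi_i$ at $(\bar x, \bar y^i)$ delivers normal regularity of $\Xi$ at $\phi(\bar x, \bar y)$; equality then propagates through Theorem \ref{normal cone estimate} and through the linear identification of the previous step.

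The main obstacle I anticipate is bookkeeping rather than anything deeply analytical: the multi-index notation for the vectors $(v^i_x, v^i_y)$ has to be carefully matched with the action of $\phi$ and of $\phi^*$, and the translation between the qualification conditions \eqref{Basic CQ2} and \eqref{QC-Cod} (as well as the analogous translation for the coderivative formula itself) must be verified component by component. Once the linear algebra and the product rule for the Mordukhovich normal cone are in place, everything reduces to a direct application of Theorem \ref{normal cone estimate}.
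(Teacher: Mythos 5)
The paper does not actually prove this statement: Theorem \ref{pp1} is imported verbatim from \cite[Proposition 3.3]{DempeZemkohoKKT-SIAM-paper2}, so there is no in-paper argument to compare yours against. On its own merits, your proof is correct and complete. The identity $\text{gph}\,\Psi=\phi^{-1}\bigl(\prod^p_{i=1}\text{gph}\,\Psi_i\bigr)$ with the diagonal linear map $\phi$ is the right realization; the adjoint computation $\phi^*(v)=\bigl(\sum^p_{i=1}v^i_x,\,v^1_y,\ldots,v^p_y\bigr)$ is accurate; the product formula $N_{\prod_i C_i}(\bar z)=\prod_i N_{C_i}(\bar z_i)$ for limiting normal cones to products of closed sets is the one standard external fact you invoke (it is classical, e.g.\ Proposition 6.41 of \cite{RockafellarWetsBook1998}, which the paper already cites), and your translation of \eqref{Basic CQ2} into \eqref{QC-Cod} via the forced vanishing $v^i_y=0$ is exactly right. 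The equality case also goes through as you say, since regular normal cones to products always factor, so normal regularity of each $\text{gph}\,\Psi_i$ gives normal regularity of $\Xi$ at $\phi(\bar x,\bar y)$, and then equality in Theorem \ref{normal cone estimate} pushes through the linear identification. This is almost certainly the argument of the cited source as well; I see no gap.
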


Next, we provide an estimate of the coderivative of a set-valued mapping defined by the convex hull of another set-valued mapping, which is not necessarily convex-valued. To proceed, consider $\Psi : \mathbb{R}^n \rightrightarrows \mathbb{R}^m$ and define $\Phi : \mathbb{R}^n \rightrightarrows \mathbb{R}^m$ by
\begin{equation}\label{coMapping}
  \Phi(x):= \mbox{co }\Psi(x).
\end{equation}
 $\Psi$ is assumed to be nonconvex-valued at some points of $\mathbb{R}^n$, as ``co'' can obviously be dropped at points where the map is convex. An upper estimate of the coderivative of $\Phi$ in terms of the coderivative of $\Psi$ can then be obtained as follows. To make the presentation of the result easier, we introduce the set 
\begin{equation}\label{dfort}
    \begin{array}{c}
  \Gamma (\bar x, \bar y):= \left\{\left.(a,b)\in \mathbb{R}^{m+1}\times \prod^{m+1}_{s=1}\mathbb{R}^{m}\right|\right. \; a\geq 0, \; \sum^{m+1}_{s=1}a_s=1, \\
  \qquad \qquad \qquad  \qquad \qquad \qquad \quad \left.\sum^{m+1}_{s=1}a_sb^s=\bar y, \; b :=(b^s)^{n+1}_{s=1}\in \prod^{m+1}_{s=1}\Psi(\bar x)\right\}.
\end{array}
\end{equation}
\begin{prop}\label{transmon}{\em
Consider $(\bar x, \bar y)\in \mbox{gph }\Phi$ and suppose that the set-valued mapping $\Psi$  \eqref{coMapping} is closed and locally bounded around $\bar x$. Furthermore, assume that \eqref{QC-Cod}, with $\Psi_s:=\Psi$ for $s=1, \ldots, m+1$, holds for all $(a,b)\in \Gamma(\bar x, \bar y)$. Then for all $y^* \in \mathbb{R}^m$, we have
\begin{equation}\label{Coderivative of ConvexHull}
  D^*\Phi(\bar x| \bar y)(y^*) \subseteq \underset{(a, b)\in \Gamma (\bar x, \bar y)}\bigcup \left[\sum^{m+1}_{s=1} D^*\Psi\left(\bar x| b^s\right)\left(a_s y^*\right)\right].
\end{equation}
}\end{prop}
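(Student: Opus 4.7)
The plan is to realize $\Phi = \operatorname{co} \Psi$ as the composition of a smooth evaluation map with a product-type set-valued map via Carath\'eodory's theorem, and then to combine Theorem \ref{chain rule} and Theorem \ref{pp1}. Let $\Delta := \{a \in \mathbb{R}^{m+1} \mid a \geq 0,\ \sum_{s=1}^{m+1} a_s = 1\}$ denote the unit simplex, introduce the smooth evaluation map
\[
E : \mathbb{R}^{m+1} \times \prod_{s=1}^{m+1} \mathbb{R}^m \to \mathbb{R}^m, \qquad E(a, b) := \sum_{s=1}^{m+1} a_s b^s,
\]
and the set-valued map
\[
G : \mathbb{R}^n \rightrightarrows \mathbb{R}^{m+1} \times \prod_{s=1}^{m+1} \mathbb{R}^m, \qquad G(x) := \Delta \times \prod_{s=1}^{m+1} \Psi(x).
\]
Carath\'eodory's theorem yields $\Phi(x) = E(G(x))$ for every $x$, and one checks directly that $G(\bar x) \cap E^{-1}(\bar y) = \Gamma(\bar x, \bar y)$.

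First I would verify the hypotheses of Theorem \ref{chain rule} for the composition $\Phi = E \circ G$. The closedness of $\mbox{gph}\, G$ follows from closedness of $\Delta$ and of $\mbox{gph}\, \Psi$; $\mbox{gph}\, E$ is closed since $E$ is continuous. Boundedness of $\Delta$ together with the assumed local boundedness of $\Psi$ around $\bar x$ ensures that $G$ is locally bounded around $\bar x$, hence so is $M(x, z) := G(x) \cap E^{-1}(z)$ around $(\bar x, \bar y)$. The qualification condition \eqref{papasal} is automatic because smoothness of $E$ yields $D^*E((a, b) | \bar y)(0) = \{0\}$. A direct Jacobian computation gives
\[
\nabla E(a, b)^{\top} y^{*} = \bigl( ((b^s)^{\top} y^{*})_{s=1}^{m+1},\ (a_s y^{*})_{s=1}^{m+1} \bigr),
\]
so Theorem \ref{chain rule} produces
\[
D^* \Phi(\bar x | \bar y)(y^{*}) \subseteq \bigcup_{(a, b) \in \Gamma(\bar x, \bar y)} D^* G(\bar x | (a, b))\bigl(((b^s)^{\top} y^{*})_{s=1}^{m+1},\ (a_s y^{*})_{s=1}^{m+1}\bigr).
\]

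Next I would analyze $D^* G$. Up to a reordering of coordinates, $\mbox{gph}\, G$ equals the Cartesian product $\Delta \times \mbox{gph}\bigl(\prod_{s=1}^{m+1} \Psi\bigr)$, and the Mordukhovich normal cone to a Cartesian product factorizes, so for every dual vector $(a^{*}, b^{*})$,
\[
D^* G(\bar x | (a, b))(a^{*}, b^{*}) \subseteq D^* \bigl(\textstyle\prod_{s=1}^{m+1} \Psi\bigr)(\bar x | b)(b^{*}),
\]
the simplex factor imposing only the additional restriction $-a^{*} \in N_\Delta(a)$, which we may drop for an upper estimate. Finally, the proposition's hypothesis is precisely the qualification \eqref{QC-Cod} of Theorem \ref{pp1} applied to $\prod_{s=1}^{m+1} \Psi$ at every $(\bar x, b)$ with $(a, b) \in \Gamma(\bar x, \bar y)$, so that theorem yields
\[
D^*\bigl(\textstyle\prod_{s=1}^{m+1} \Psi\bigr)(\bar x | b)\bigl((a_s y^{*})_{s=1}^{m+1}\bigr) \subseteq \sum_{s=1}^{m+1} D^* \Psi(\bar x | b^s)(a_s y^{*}).
\]
Chaining these three inclusions produces the desired estimate \eqref{Coderivative of ConvexHull}.

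The main technical delicacy I anticipate is ensuring that the qualification conditions for Theorem \ref{chain rule} and Theorem \ref{pp1} line up exactly with the stated hypothesis on $\Gamma(\bar x, \bar y)$, and cleanly justifying that the simplex factor does not leak into the final estimate beyond restricting the index set. The Jacobian computation for $E$, the closedness and local boundedness checks, and the coordinate reordering in the product normal cone are all routine.
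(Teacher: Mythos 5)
Your proposal is correct and follows essentially the same route as the paper: the Carath\'eodory decomposition $\Phi = \ell \circ Q$ with $Q(x) = \Xi \times \prod_{s=1}^{m+1}\Psi(x)$, the chain rule of Theorem \ref{chain rule} with the same local-boundedness check and Jacobian computation, and then Theorem \ref{pp1} to split the coderivative of the product. The only cosmetic difference is that you peel off the simplex factor by hand via the product structure of the normal cone, whereas the paper keeps it inside the application of Theorem \ref{pp1} and observes that the coderivative of the constant mapping $\Xi$ is $\{0_n\}$.
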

\begin{proof}Start by recalling that as $\Phi(x) \subseteq \mathbb{R}^m$ for all $x\in \mathbb{R}^n$, it follows from the well-known Theorem of Carath\'{e}odory that $\Phi(x)$ can be rewritten as
$$
\Phi(x) = \left\{\left.\sum^{m+1}_{s=1} \eta_s v^s\right|\; \eta_s\geq 0, \; v^s \in \Psi(x),\; s=1, \ldots, m+1, \sum^{m+1}_{s=1} \eta_s=1   \right\}.
$$
Based on this expression, we can easily check that $\Phi$ can take the form
$$
\begin{array}{l}
  \Phi(x) = \ell \circ Q(x)\\
  \mbox{with } \left\{
  \begin{array}{l}
    \ell(a,b):= \sum^{m+1}_{s=1} a_s b^s,\\
    Q(x) := \Xi \times \prod^{m+1}_{s=1}\Psi(x),\\
    \Xi:=\left\{a\in \mathbb{R}^{m+1}| \; a\geq 0, \; \sum^{m+1}_{s=1} a_s=1 \right\}.
  \end{array}
  \right.
\end{array}
$$
Considering the continuous differentiability of $\ell$, the closedness of the set $\Xi$ and the set-valued mapping $\Psi$, it follows from the chain rule above, cf. Theorem \ref{chain rule}, that for $\bar x\in \mathbb{R}^n$ and $\bar y\in \Phi(\bar x)$, it holds that
\begin{equation}\label{mekre}
D^*\Phi(\bar x| \bar y)(y^*) \subseteq \underset{(a, b)\in Q(\bar x)\cap \ell^{-1}(\bar y)}\bigcup \left[D^*Q(\bar x| a,b)(\nabla \ell(a,b)^\top y^*)  \right]
\end{equation}
for $y^*\in \mathbb{R}^m$, provided that set-valued mapping
$
M(x,y):=\left\{(a,b)\in Q(x)|\; \ell(a,b)=y \right\},
$
counterpart of \eqref{M(xz)} is locally bounded around $(\bar x, \bar y)$.
Obviously, from the definition of this mapping, $M(x,y)\subseteq \Xi \times \prod^{m+1}_{s=1}\Psi(x)$ for all $(x,y)$. Hence, $M$ is locally bounded around $(\bar x, \bar y)$ given that $\Xi$ is a bounded set and $\Psi$ is assumed to be locally bounded around $\bar x$.
Now observe that any $b :=(b^s)^{n+1}_{s=1}\in \prod^{m+1}_{s=1} \mathbb{R}^m$ is a $m\times (m+1)$ matrix. We rearrange it as a $m^2+m$-dimensional  column vector and proceed with the following  notation for the rest of the proof:
$$
b:=\left[b^1_1 \ldots b^1_m\; \ldots \;  b^{m+1}_1 \ldots b^{m+1}_{m}\right]^\top \; \mbox{ and }\; \bar b := \left[\begin{array}{ccc}
     b^1_1 & \ldots  & b^{m+1}_1 \\
     \vdots & \ldots  & \vdots \\
     b^1_m & \ldots  & b^{m+1}_m
   \end{array}\right].
$$
Then simple calculations show that $\nabla \ell(a,b) = \left[\bar b \;\; a_1I_m \;\; \ldots\;\; a_{m+1}I_m \right]$.
Hence,
$$
\nabla \ell(a,b)^\top y^* = \left[(\bar b^\top y^*)^\top, \; (a_1y^*)^\top,\;  \ldots, \; (a_{m+1}y^*)^\top\right]^\top
$$
for $y^*\in \mathbb{R}^m$. Considering this formula, the application of Theorem \ref{pp1} to the set-valued mapping $Q$ at the vector $\nabla \ell(a,b)^\top y^*$ leads to the inclusion
\begin{equation}\label{mamama}
    D^*Q(\bar x| a,b)(\nabla \ell(a,b)^\top y^*) \subseteq \sum^{m+1}_{s=1} D^*\Psi\left(\bar x| b^s\right)\left(a_s y^*\right),
\end{equation}
given that the coderivative of the constant mapping defined by $\Xi$ is $\{0_n\}$ and   condition \eqref{QC-Cod} is assumed to hold for all $(a, b)$  such that $(a, b)\in Q(\bar x)$ and $\ell(a,b)= \bar y$. Then combining \eqref{mamama} with \eqref{mekre}, we have the result.
\end{proof}
\begin{rem}
Note that we also have from Theorem \ref{pp1} that equality holds in \eqref{Coderivative of ConvexHull}, if $\text{gph}\, \Psi$ is normally regular at $(\bar x, b^s)$ for $s=1, \ldots, m+1$. Furthermore, one can easily check that \eqref{Coderivative of ConvexHull} is a natural extension of the coderivative of $\Phi$ \eqref{coMapping} when $\Psi$ is single-valued and differentiable. In fact, in the latter case,
$
D^*\Phi(\bar x| \bar y)(y^*)= \nabla \Psi (\bar x)^\top y^*.
$
\end{rem}
To close this section, we introduce the  Lipschitz-likeness property that will used in the next section. A set-valued mapping $\Psi :\mathbb{R}^n \rightrightarrows \mathbb{R}^m$ is \emph{Lipschitz-like} at $(\bar x, \bar y)\in \mbox{gph }\Psi$ if there are neighborhoods $U$ of $\bar{x}$, $V$ of $\bar{y}$, and a constant $\kappa>0$ such that
$
d(y,\Psi(x))\le\kappa\|x-u\|\;\mbox{ for all }\;x,u\in U\;\mbox{ and }\;y\in\Psi(u)\cap V,
$
where $d$ stands for the usual distance function.  We can add that a weaker Lipschitz property, known as \emph{calmness}, can be obtained if we fix $u$ to $x$;
precisely, $\Psi $ is \emph{calm} at $(\bar x, \bar y)\in \mbox{gph }\Psi$ if there are neighborhoods $U$ of $\bar{x}$, $V$ of $\bar{y}$, and a constant $\kappa>0$ such that
$
d(y,\Psi(x))\le\kappa\|x-\bar x\|\;\mbox{ for all }\;x\in U\;\mbox{ and }\;y\in\Psi(u)\cap V.
$
A closed set-valued mapping $\Psi$ is  Lipschitz-like around $(\bar x, \bar y)$ if and only if the condition
\begin{equation}\label{coderivative criterion}
D^*\Psi(\bar{x}|\bar{y})(0)=\{0\},
\end{equation}
known as the \emph{coderivative/Mordukhovich criterion}, is satisfied at $(\bar x, \bar y)$; cf. \cite[Theorem~5.7]{MordukhovichBook2006} and \cite[Theorem~9.40]{RockafellarWetsBook1998}. Observe for instance that if this criterion holds for $F$ in \eqref{papasal} and $\Psi_i$ ($i=1,\ldots, p$) for \eqref{QC-Cod}, then the corresponding conditions are automatically satisfied. This is therefore the case if these set-valued maps are closed and Lipschitz-like around the corresponding points.

\section{On the subdifferential of the optimal value function}\label{On the subdifferential}
To start this subsection, we recall that the fundamental goal of this paper is to develop \emph{generalized Hessians} (also known as \emph{second order subdifferentials}) of the optimal value function $\varphi$ \eqref{OptValueFn}. Hence, it would be natural to first clarify the expressions of the \emph{subdifferentials} or \emph{first order subdifferentials}, to be precise, of this function. These quantities and further properties have been extensively studied in the literature; see, e.g., \cite{BonnansShapiroBook2000, DanskinBook1967, FiaccoBook1983, GauvinDubeau1982, MordukhovichNamPhanVarAnalMargBlP,MordukhovichNamYenSubgradients2009,ShimizuIshizukaBardBook1997} and references therein. 
Below, we recall the relevant aspects of these properties while adding some crucial aspects based on the \emph{concave-convexity}, that we define below. Before, note that from here on, the feasible set of the parametric problem \eqref{ParametricOptimization} will be defined by the following set-valued mapping:
\begin{equation}\label{K(x)}
    K(x):=\left\{y\in \mathbb{R}^m|\; g(x,y)\leq 0 \right\}.
\end{equation}
 A  function $\psi$ defined from $\mathbb{R}^{n+m}$  to $\mathbb{R}$ by $(x,y) \mapsto \psi(x,y)$ will be said to be \emph{concave-convex} if the function $\psi(., y)$ is concave for all $y\in \mathbb{R}^m$, while the function $\psi(x, .)$ is convex for all $x\in \mathbb{R}^n$. Subsequently, problem \eqref{ParametricOptimization} will be said to be convex-concave if the functions $f$ and $g_i$, $i=1, \ldots, p$, are concave-convex. Similarly, problem \eqref{ParametricOptimization} will just be said to be convex if the latter functions are convex w.r.t. $y$. We will also use the \emph{Mangasarian-Fromovitz constraint qualification} (\emph{MFCQ})
 \begin{equation}\label{MFCQ}
 \left.
 \begin{array}{r}
   \nabla_y g(\bar x, \bar y)^\top u=0\\
   u\geq 0, \;g(\bar x, \bar y)\leq 0, \; u^\top  g(\bar x, \bar y)=0
 \end{array}
 \right\} \Longrightarrow u=0
 \end{equation}
 and the \emph{linear independence constraint qualification} (\emph{LICQ})
 \begin{equation}\label{LICQ}
   \sum_{i\in I(\bar x, \bar y)}u_i\nabla_y g_i(\bar x, \bar y)=0 \;\; \Longrightarrow \;\; u_i =0, \;\; i\in I(\bar x, \bar y),
 \end{equation}
 where $I(\bar x, \bar y):=\left\{i=1, \ldots, p|\;  g_i(\bar x, \bar y)=0\right\}$. It is well-known that if the LICQ holds at $(\bar x, \bar y)$, then the MFCQ will automatically hold at the same point.
\begin{thm}\label{subdifferentialEstimate}{\em  Considering the optimal value function \eqref{OptValueFn}, it holds that:
\begin{itemize}
\item[(i)] If $Y$ defines a compact set such that $K(x):=Y$ for all $x\in \mathbb{R}^n$, then for all $x\in \mathbb{R}^n$,
 \begin{equation}\label{copartialphi}
\bar{\partial} \varphi(x) = \mbox{co}\left\{\nabla_x f(x, y)\left| \;\; y\in S(x) \right.\right\}.
\end{equation}
 If additionally, $Y$ is convex and $f$ concave-convex, then for all $x\in \mathbb{R}^n$,
\begin{equation}\label{copartialphi-no co}
 \bar{\partial} \varphi(x) = \left\{\nabla_x f(x, y)\left| \;\; y\in S(x) \right.\right\}.
\end{equation}
 \item[(ii)] Let problem \eqref{ParametricOptimization} be convex and $S$ \eqref{S(x)} single-valued (i.e., $S:=s$) around $\bar x$. Furthermore, let $\mbox{gph }K$ be compact and the MFCQ hold at $(\bar x, \bar y)$ with $\bar y = s(\bar x)$. Then  for all $x$ near $\bar x$, with $s(x):=y$, it holds that
  \begin{equation}\label{Sub-varphi}
   \bar{\partial} \varphi(x) = \underset{u\in \Lambda (x,y)}\bigcup\; \left\{\nabla_x f(x,y) + \nabla_x g(x,y)^\top u \right\}.
  \end{equation}
  \item[(iii)] If $\mbox{gph }K$ is compact and the LICQ holds at $(x, y)$, for all $y\in S(x)$, then  the following equality holds around $x$, with $u=\lambda(x, y)$:
  \begin{equation}\label{subphi-KM-II}
   \bar{\partial} \varphi(x) = \mbox{ co }\underset{y\in S(x)}\bigcup \; \left\{\nabla_x f(x,y) + \nabla_x g(x,y)^\top u \right\}.
  \end{equation}
\end{itemize}
}\end{thm}
\begin{proof}
(i) Equality \eqref{copartialphi} is a well-known result by Danskin \cite{DanskinBook1967}. 
As for \eqref{copartialphi-no co}, the maximization case proven in \cite{bernhard1995theorem} can easily be adapted to our minimization case in  \eqref{OptValueFn}. 
(ii) As the MFCQ holds at $(\bar x, y)$ with $\bar y= s(\bar x)$, and remains persistent in some neighborhood of this point, it is well-known that the formula \eqref{Sub-varphi} will hold in some neighborhood of $\bar x$, given that $\mbox{gph }K$ is compact; see, e.g.,  \cite{ShimizuIshizukaBardBook1997}.
(iii) Start by noting that as in the previous case, the LICQ being persistent near $(\bar x, y)$ for all $y \in S(\bar x)$, where it holds, then we have \eqref{subphi-KM-II} from Gauvin and Dubeau \cite{GauvinDubeau1982} given that  $\mbox{gph }K$ is compact.
\end{proof}
It is important to recall that the compactness assumption on $\mbox{gph }K$ can be relaxed by instead imposing some set-valued-type continuity properties on $S$ \eqref{S(x)}; see, e.g., \cite{BonnansShapiroBook2000,  GauvinDubeau1982, MordukhovichNamPhanVarAnalMargBlP,MordukhovichNamYenSubgradients2009,ShimizuIshizukaBardBook1997}. But for the purpose of simplifying the framework used in this paper, we do not consider such relaxations here. However, most of the developed results will remain valid under such assumptions.
It is also important to mention that the generalized Hessian constructions developed in this paper can rely on any exact formulas of the subdifferential of $\varphi$; exact formulas for subdifferentials of $\varphi$ based on intersection operators can be found in \cite[Theorem 5.1d]{Benko2021} or \cite[Chapter 6]{ShimizuIshizukaBardBook1997}, for example. But, of course, depending on the expression of $\bar\partial \varphi(x)$ used, we might need different requirements and the resulting estimates might also be different.

It is clear from Theorem \ref{subdifferentialEstimate} that the subdifferential of $\varphi$ is a ``function'' of the Lagrange multipliers and optimal solution set-valued mappings. It is therefore natural to imagine that second order subdifferentials for $\varphi$ can primarily be constructed based on the generalized differentiation tools for this mappings. Hence, to get well prepared for our main results in the next sections, we first provide some useful properties of these mappings here. We start with a coderivative estimate for $\Lambda$ and deduce a condition ensuring that this mapping is \emph{Lipschitz-like}.
From here on, we will also assume that the graph of the set-valued mapping $\Lambda$ \eqref{Lambdadefined} is nonempty. Given that $S(x)\neq \emptyset$ for all $x\in \mathbb{R}^n$, the latter is automatically satisfied if there exists a point $(x,y)\in \mbox{gph }S$, where a constraint qualification, e.g., the MFCQ or LICQ, holds. Also recall that for a point $(\bar{x} , \bar{y} , \bar{u})\in \mbox{gph }\Lambda$, we can define  the standard partition
\begin{equation}\label{multiplier sets}
\begin{array}{rllll}
 \eta  &:=& \eta(\bar{x} , \bar{y} , \bar{u}) &:=& \left\{i=1, \ldots, p\,|\; \bar{u}_i =0, \, g_i(\bar{x} , \bar{y} )<0\right\},\\
\theta   &:=& \theta(\bar{x} , \bar{y} , \bar{u}) &:=& \{i=1, \ldots, p\,|\; \bar{u}_i =0, \, g_i(\bar{x} , \bar{y} )=0\},\\
\nu   &:=&  \nu(\bar{x} , \bar{y} , \bar{u}) &:=& \{i=1, \ldots, p\,|\; \bar{u}_i >0, \, g_i(\bar{x} , \bar{y} )=0\},
\end{array}
\end{equation}
of the indices of the constraints of the feasible set of problem \eqref{ParametricOptimization}. This allows us to introduce the following special class of multipliers, which permits an elegant presentation of the remaining results of this section:
\begin{equation}\label{yameogo}
\mho(\bar x, \bar y, \bar u, u^*):=\left\{(a,c)\left|\begin{array}{r}
                                                     u^*_\nu + \nabla_yg_\nu(\bar x,\bar y)a=0, \; c_\eta=0\\
                                                     \forall i\in \theta: \; \left(u^*_i + \nabla_yg_i(\bar x, \bar y)a >0\,\wedge c_i>0\right)\\
                                                     \qquad\qquad \qquad\vee c_i \left(u^*_i + \nabla_yg_i(\bar x, \bar y)a\right)=0                                              \end{array}\right.\right\}
\end{equation}
with $(\bar{x} , \bar{y} , \bar{u})\in \mbox{gph }\Lambda$ and $u^*\in \mathbb{R}^p$.
\begin{prop}\label{gouola}{\em Consider a point $(\bar{x} , \bar{y} , \bar{u})\in \mbox{gph }\Lambda$  and suppose that we have
\begin{equation}\label{CQfortheThing}
    \left.
\begin{array}{r}
                                               \nabla^2_{yx}L(\bar x, \bar y, \bar u)a +  \nabla_xg(\bar x, \bar y)^\top c=0\\
                                               \nabla^2_{yy}L(\bar x, \bar y, \bar u)a +  \nabla_yg(\bar x, \bar y)^\top c=0\\
                                                (a, c)\in \mho(\bar x, \bar y, \bar u, 0)
                                                          \end{array}
\right\} \Longrightarrow \left\{\begin{array}{l}
                                  a=0,\\
                                  c=0.
                                \end{array}
 \right.
\end{equation}
Then for all $u^*\in \mathbb{R}^p$, we have the following upper estimate:
\begin{equation}\label{Delanoe}
\begin{array}{l}
  D^*\Lambda(\bar x, \bar y| \bar u)(u^*)  \subseteq \\[1ex]
                                          \qquad \qquad  \left\{\left.\left[\begin{array}{c}
                                               \nabla^2_{yx}L(\bar x, \bar y, \bar u)a +  \nabla_xg(\bar x, \bar y)^\top c\\
                                               \nabla^2_{yy}L(\bar x, \bar y, \bar u)a +  \nabla_yg(\bar x, \bar y)^\top c
                                                          \end{array}
 \right]\right| (a, c)\in \mho(\bar x, \bar y, \bar u, u^*) \right\}.
\end{array}
\end{equation}
Furthermore, $\Lambda$ is Lipschitz-like around $(\bar x, \bar y, \bar u)$, provided we also have
\begin{equation}\label{holala}
\begin{array}{l}
(a, c)\in \mho(\bar x, \bar y, \bar u, 0)  \Longrightarrow \left\{\begin{array}{l}
                                  \nabla^2_{yx}L(\bar x, \bar y, \bar u)a +  \nabla_xg(\bar x, \bar y)^\top c=0,\\
                                               \nabla^2_{yy}L(\bar x, \bar y, \bar u)a +  \nabla_yg(\bar x, \bar y)^\top c=0.
                                \end{array}
 \right.
\end{array}
\end{equation}
}\end{prop}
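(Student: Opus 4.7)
\textbf{Proof plan for Proposition \ref{gouola}.} The strategy is to realize $\mbox{gph }\Lambda$ as the preimage of a well-understood set under a smooth mapping, apply the normal-cone chain rule of Theorem \ref{normal cone estimate}, and translate the resulting conditions into the language of the multiplier set $\mho$. Concretely, I would write $\mbox{gph }\Lambda=H^{-1}(\{0\}\times C)$, where
$$
H(x,y,u):=\bigl(\nabla_y L(x,y,u),\,g(x,y),\,u\bigr)
$$
is continuously differentiable and $C:=\{(v,u)\in\mathbb{R}^p\times\mathbb{R}^p\mid v\leq 0,\,u\geq 0,\,v^\top u=0\}$ is the standard complementarity set. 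Theorem \ref{normal cone estimate} then yields
$$
N_{\mbox{gph }\Lambda}(\bar x,\bar y,\bar u)\subseteq\bigcup\bigl\{\nabla\langle w,H\rangle(\bar x,\bar y,\bar u)\,\big|\,w\in N_{\{0\}\times C}(H(\bar x,\bar y,\bar u))\bigr\},
$$
provided the qualification condition \eqref{Basic CQ2} applied to $H$ holds at $(\bar x,\bar y,\bar u)$.

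The next step is to make this right-hand side explicit. Writing $w=(\beta,\gamma,\delta)\in\mathbb{R}^m\times\mathbb{R}^p\times\mathbb{R}^p$, the factor $\beta$ is free and $(\gamma,\delta)\in N_C(g(\bar x,\bar y),\bar u)$. Since $C$ splits over components, its limiting normal cone is computed componentwise using the partition \eqref{multiplier sets}: for $i\in\eta$ one has $\gamma_i=0$ with $\delta_i$ free; for $i\in\nu$ one has $\gamma_i$ free with $\delta_i=0$; and for $i\in\theta$ the classical L-shape calculation produces the disjunction $(\gamma_i>0\wedge\delta_i<0)\vee\gamma_i\delta_i=0$. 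A direct differentiation of $\langle w,H\rangle$ then gives a gradient whose three blocks reproduce the right-hand side of \eqref{Delanoe} in the $x$- and $y$-components and take the form $\nabla_y g(\bar x,\bar y)\beta+\delta$ in the $u$-component.

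To convert this into a coderivative estimate, I would recall that $(x^*,y^*)\in D^*\Lambda(\bar x,\bar y|\bar u)(u^*)$ if and only if $(x^*,y^*,-u^*)\in N_{\mbox{gph }\Lambda}(\bar x,\bar y,\bar u)$. Matching the $u$-component forces $\delta=-u^*-\nabla_y g(\bar x,\bar y)\beta$, and substituting this expression into the componentwise description of $N_C$ converts the conditions on $(\gamma_i,\delta_i)$ precisely into those defining $\mho(\bar x,\bar y,\bar u,u^*)$ in \eqref{yameogo}, after renaming $(a,c):=(\beta,\gamma)$. Reading off the first two gradient components then delivers the inclusion \eqref{Delanoe}, while the same substitution applied with $u^*=0$ and zero gradient shows that the qualification condition required by Theorem \ref{normal cone estimate} translates verbatim into \eqref{CQfortheThing}.

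Finally, the Lipschitz-like assertion follows from the Mordukhovich criterion \eqref{coderivative criterion}: it suffices to prove $D^*\Lambda(\bar x,\bar y|\bar u)(0)=\{0\}$. By \eqref{Delanoe} at $u^*=0$, every element of this set arises from some $(a,c)\in\mho(\bar x,\bar y,\bar u,0)$ through the two right-hand-side formulas, and the hypothesis \eqref{holala} forces both to vanish. The main technical obstacle throughout is the careful componentwise analysis on the biactive index set $\theta$, where the L-shape geometry produces the disjunctive conditions encoded in \eqref{yameogo}; the remaining ingredients reduce to the smooth chain rule and book-keeping.
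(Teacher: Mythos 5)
Your proposal is correct and follows essentially the same route as the paper: both realize $\mbox{gph }\Lambda$ as the preimage of a complementarity set under a smooth map (the paper uses $\psi(x,y,u)=[\nabla_yL(x,y,u)^\top,u^\top,-g(x,y)^\top]$ with $\Xi=\{0_m\}\times\Theta$, which is your $H$ and $C$ up to reordering and a sign flip), apply Theorem \ref{normal cone estimate}, compute $N_\Xi$ componentwise over the partition $\eta,\theta,\nu$, match the $u$-component to eliminate one multiplier block, and obtain Lipschitz-likeness from the coderivative criterion \eqref{coderivative criterion}. No substantive differences.
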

\begin{proof}Start by quickly recalling that the set-valued mapping $\Lambda$ is closed, given that all the functions involved in problem \eqref{ParametricOptimization} are assumed to continuously differentiable. Furthermore, it can be written as
\begin{equation}\label{LambdaRefor}
    \begin{array}{l}
\Lambda(x,y)= \left\{u\in \mathbb{R}^p|\; \psi(x,y,u)\in \Xi\right\}\\
\mbox{with }\left\{\begin{array}{l}
                      \psi(x,y,u):= \left[\nabla_yL(x,y,u)^\top, u^\top, -g(x,y)^\top \right],\\
                      \Xi:= \{0_m\}\times \Theta,\\
                      \Theta:=\left\{(a,b)\in \mathbb{R}^{p}\times \mathbb{R}^{p}|\; a\geq 0, \; b\geq 0, \; a^\top b=0\right\}.
                   \end{array}
\right.
\end{array}
\end{equation}
Let $(x^*, y^*)\in D^*\Lambda(\bar x, \bar y| \bar u)(u^*)$. Then, by the definition of the concept of coderivative \eqref{cod-definition}, $(x^*, y^*, -u^*)\in N_{\mbox{gph }\Lambda}(\bar x, \bar y, \bar u)$. Hence, it follows from Theorem \ref{normal cone estimate} that there exists a vector $(a, b, c)\in N_{\Xi}(\psi(\bar x, \bar y, \bar u))$ such that
\begin{align}
\left[\begin{array}{r}
                                                            x^*\\
                                                            y^*\\
                                                            -u^*
                                                          \end{array}
 \right] =\nabla \psi(\bar x, \bar y, \bar u)^\top\left[\begin{array}{r}
                                                            a\\
                                                            b\\
                                                            c
                                                          \end{array}
 \right] = \left[\begin{array}{l}
                                                            \nabla^2_{yx}L(\bar x,\bar y,\bar u)a -  \nabla_xg(\bar x,\bar y)^\top c\\
                                                            \nabla^2_{yy}L(\bar x,\bar y,\bar u)a -  \nabla_yg(\bar x,\bar y)^\top c\\
                                                             \nabla_yg(\bar x,\bar y) a + b
                                                          \end{array}
 \right],\label{medong}
\end{align}
provided that the counterpart of \eqref{Basic CQ2} holds at $(\bar x, \bar y, \bar u)$.  For the latter requirement and the finalization of the estimate in \eqref{Delanoe}, note that
\[
                                 N_{\Xi}(\psi(\bar x, \bar y, \bar u))=\mathbb{R}^m\times \left\{\begin{array}{lll}
                                                                             & u^*_i=0 & \forall i \in \nu \\
                                                                            (u^*,v^*)\in\mathbb{R}^{p}\times \mathbb{R}^{p}: & v^*_i=0 & \forall i \in \eta\\
                                                                             & (u^*_i<0 \wedge v^*_i<0)\, \vee\, u^*_iv^*_i=0 & \forall i\in \theta
                                                                          \end{array}\right\}.
\]
Combining this equality with  \eqref{medong}, one can easily check that the counterpart of \eqref{Basic CQ2} is satisfied under assumption  \eqref{CQfortheThing}. As for the Lipschitz-likeness of $\Lambda$ around $(\bar x, \bar y)$, observe from the discussion above that
\begin{equation}\label{Delanoe}
\begin{array}{l}
  D^*\Lambda(\bar x, \bar y| \bar u)(0)  \subseteq \\[1ex]
                                          \qquad \qquad  \left\{\left.\left[\begin{array}{c}
                                               \nabla^2_{yx}L(\bar x, \bar y, \bar u)a +  \nabla_xg(\bar x, \bar y)^\top c\\
                                               \nabla^2_{yy}L(\bar x, \bar y, \bar u)a +  \nabla_yg(\bar x, \bar y)^\top c
                                                          \end{array}
 \right]\right| (a, c)\in \mho(\bar x, \bar y, \bar u, 0) \right\}.
\end{array}
\end{equation}
Hence, under \eqref{holala}, $D^*\Lambda((\bar x, \bar y)| \bar u)(0)=\{0\}$. This ensures that $\Lambda$ is Lipschitz-like around $(\bar x, \bar y, \bar u)$, based on the Mordukhovich coderivative criterion \eqref{coderivative criterion}.
\end{proof}
\begin{rem}\label{rouena}
The conclusions of Proposition \ref{gouola} remain valid if the multipliers set $\mho(\bar x, \bar y, \bar u, u^*)$ in \eqref{yameogo} is replaced by the following one:
\begin{equation}\label{yameogoC}
\overline{\mho}(\bar x, \bar y, \bar u, u^*):=\left\{(a, c)\left|\begin{array}{r}
                                                     u^*_\nu + \nabla_yg_\nu(x,y)a=0, \; c_\eta=0\\
                                                     \forall i\in \theta: \; c_i \left(u^*_i + \nabla_yg_i(x,y)a\right)\geq 0                                              \end{array}\right.\right\}.
\end{equation}
The implications in \eqref{CQfortheThing} and \eqref{holala} correspond to M(or \emph{Mordukhovich})-type conditions while \eqref{Delanoe} can be labeled as M-type estimate of the coderivative of $\Lambda$. Similarly, with \eqref{yameogoC}, we will respectively have C(or \emph{Clarke})-type conditions and a C-type estimate for the coderivative of  $\Lambda$. More details on constructions and vocabulary in this vein can be found, for example, in \cite{DempeMordukhovichZemkohoTwo-level,DempeZemkohoOnTheKKTRef}. It is also important to recall that the result in Proposition \ref{gouola} can be obtained if we replace condition \eqref{CQfortheThing} by the weaker calmness of the  set-valued mapping
\[
{ {\Psi(v):= \left\{(x, y, u)\in \mathbb{R}^n\times \mathbb{R}^m\times \mathbb{R}^p\left|\; \psi(x,y,u) + v\in \Xi\right.\right\},}}
\]
where the function $\psi$ and the set $\Xi$ are defined in \eqref{LambdaRefor}; see, e.g., \cite{Benko2021, HenrionEtAL2002}.
\end{rem}

Next, we provide a simple, yet powerful relationship between the coderivatives of $S$ and $\Lambda$, that will allow the derivation of a complete estimate of the former based on Proposition \ref{gouola}.
\begin{prop}\label{teclair}
{\em Suppose that the functions $f(x, .)$ and $g_i(x, .)$, $i=1, \ldots, p$, are convex, for all $x\in \mathbb{R}^n$, and the MFCQ holds at $(\bar x, \bar y)\in \mbox{gph }S$. Then for all $y^*\in \mathbb{R}^m$,
\begin{equation}\label{CodSNO}
    D^*S(\bar x|\bar y)(y^*) \subseteq \underset{u\in \Lambda (\bar x, \bar y)} \bigcup  \left\{x^*\in \mathbb{R}^n| \; (x^*, -y^*)\in D^*\Lambda(\bar x, \bar y| u)(0) \right\}.
\end{equation}
If additionally, condition \eqref{CQfortheThing} holds at $(\bar x, \bar y, u)$ for all $u\in \Lambda (\bar x, \bar y)$, then for all $y^*\in \mathbb{R}^m$,
\begin{equation}\label{gouene1}
\begin{array}{rll}
  D^*S(\bar x|\bar y)(y^*)  & \subseteq & \underset{u\in \Lambda (\bar x, \bar y)} \bigcup  \;\;\underset{(a, c)\in \mho(\bar x, \bar y, u, 0)} \bigcup \left\{\nabla^2_{xy}L(\bar x, \bar y,  u)^\top a +  \nabla_xg(\bar x, \bar y)^\top c\right|\\
                                     &  &\qquad \qquad \qquad \qquad  \left. y^* + \nabla^2_{yy}L(\bar x, \bar y, u)a +  \nabla_yg(\bar x, \bar y)^\top c =0 \right\}.
\end{array}
\end{equation}
}
\end{prop}
\begin{proof}
Under the assumption that the functions $f(x, .)$ and $g_i(x, .)$, $i=1, \ldots, p$, are convex, for all $x\in \mathbb{R}^n$ and the MFCQ holds at $(\bar x, \bar y)\in \mbox{gph }S$, it follows that near this point, the optimal solution set-valued mapping can take the form
$$
\begin{array}{l}
S(x)= \Pi_1 \circ Q(x)\\
\mbox{with }\; \left\{
\begin{array}{l}
\Pi_1(y,u):= y,\\
Q(x):= \left\{(y,u)|\; u\in \Lambda(x,y)\right\}.
\end{array}
\right.
\end{array}
$$
Observe from the definition of the set-valued map $Q$ and the function $\Pi_1$ that we have $\nabla \Pi_1(y,u)^\top y^* = \left[(y^*)^\top, 0^\top_p\right]^\top$ and $(y, u)\in Q(\bar x)\cap \Pi^{-1}_1(\bar y)$ if and only if $u\in \Lambda (\bar x, \bar y)$. The set-valued mapping $\Lambda$ is closed, given that the functions $f$ and $g$ are assumed to be continuously differentiable throughout the paper.
 Then applying the chain rule from Theorem \ref{chain rule} to the above expression of $S$,
\begin{equation}\label{yespala}
    D^*S(\bar x|\bar y)(y^*) \subseteq \underset{u\in \Lambda (\bar x, \bar y)} \bigcup D^*Q(\bar x| \bar y, u)(y^*, 0),
\end{equation}
provided that the set-valued  mapping
$
M(x,y):=\left\{(z,u)|\; (z,u)\in Q(x), \; \Pi_1 (z,u)=y  \right\}
$
is locally bounded around $(\bar x, \bar y)$. The latter is indeed true and can easily be shown. 
On the other hand, we have
\begin{equation}\label{beaucoup}
\begin{array}{rll}
 D^*Q(\bar x| \bar y, u)(y^*, u^*) & = & \left\{x^*| \; (x^*, -y^*, -u^*)\in N_{\mbox{gph }Q}(\bar x, \bar y, u) \right\}\\
                              & = & \left\{x^*| \; (x^*, -y^*, -u^*)\in N_{\mbox{gph }\Lambda}(\bar x, \bar y, u) \right\}\\
                              & = & \left\{x^*| \; (x^*, -y^*)\in D^*\Lambda(\bar x, \bar y| u)(u^*) \right\}
\end{array}
\end{equation}
considering the fact that the graph of $Q$ is the same as that of $\Lambda$. Combining the last equality in \eqref{beaucoup} with inclusion \eqref{yespala}, we have \eqref{CodSNO}. As for the Lipschitz-like property of $S$ at $(\bar x, \bar y)$, this is based on inclusion \eqref{CodSNO}, while applying the coderivative criterion \eqref{coderivative criterion}, given that $S$ is a closed set-valued mapping.
\end{proof}
\begin{rem}
Suppose that the functions $f(x, .)$ and $g_i(x, .)$, $i=1, \ldots, p$, are convex, for all $x\in \mathbb{R}^n$, and the MFCQ holds at $(\bar x, \bar y)\in \mbox{gph }S$. If in addition, the set-valued mapping $S$ is closed, then $S$ is Lipschitz-like around $(\bar x, \bar y)$ provided that  $\Lambda$ is Lipschitz-like around $(\bar x, \bar y, u)$, for all $u\in \Lambda(\bar x, \bar y)$. It is clear from Proposition \ref{gouola} that the latter holds if the qualification condition \eqref{holala} holds at $(\bar x, \bar y, u)$, for all $u\in \Lambda(\bar x, \bar y)$.
\end{rem}

This estimate of the coderivative of $S$  in \eqref{gouene1} was obtained in \cite[Theorem 4.3]{MordukhovichOutrataCoderivativeOfQuasi-Var2007} using a different approach, which relies on the computation of the coderivative of a certain normal cone map. Also note that the mapping in the latter reference is slight more general and the assumption framework is based on calmness constructions. Our results are amenable to more general settings, and corresponding calmness based constructions are possible; see some related discussions in the next sections.


Coming towards the end of this section, it is important to emphasize that the Lipschitz-like property for $\Lambda$ \eqref{Lambdadefined} or $S$ \eqref{S(x)}, in the sense of set-valued mappings, is not necessary to achieve the principal goal of this paper. The main condition in the process is \eqref{CQfortheThing}, which allows us to estimate the coderivative of the relevant map. Hence, let us provide a small example to illustrate the condition \eqref{CQfortheThing}.
\begin{example} Consider the value function $\varphi$ \eqref{ParametricOptimization} with $f$ and $g$ respectively defined by
$$
\begin{array}{lll}
f(x,y):= \left(y_1 - x_1 +20\right)^2 + \left(y_2 - x_2 +5\right)^2 & \mbox{and}& g(x,y):= \left(\begin{array}{c}
                                                                                         2y_1 - x_1 +10\\
                                                                                         2y_2 - x_2 +10\\
                                                                                         - y_1 - 10\\
                                                                                         - y_2 - 10\\
                                                                                          \;\;\, y_1 - 20\\
                                                                                          \;\;\, y_2 - 20
                                                                                      \end{array}\right).
\end{array}
$$
Considering the reference point $(\bar x, \bar y)$ with $\bar x := (25, 30)$ and $\bar y :=(5, 10)$, we can easily check that the vector $\bar u$, with second component $5$ and zero everywhere else, is a corresponding Lagrange multiplier. Furthermore, at  $(\bar x, \bar y, \bar u)$ , $\eta=\{1, 3, 4, 5, 6\}$, $\theta=\emptyset$, and $\nu:=\{2\}$. With this configuration of the index sets, we can easily check that
$$
\overline{\mho}(\bar x, \bar y, \bar u, 0)=\left\{(a, c)\in \mathbb{R}^2 \times \mathbb{R}^6|\;\, a_2=0, \; c_1=0, \; c_3=0, \; c_4=0, \; c_5=0, \; c_6=0\right\}.
$$
Hence, to show that condition \eqref{CQfortheThing} holds, it suffices to demonstrate that $a_1=0$ and $c_2=0$ for any vector $(a,c)$ verifying the left-hand-side of the implication. To proceed, observe that in the context of this example, equation
$
\nabla^2_{yx}L(\bar x, \bar y, \bar u)a +  \nabla_xg(\bar x, \bar y)^\top c=0
$
is equivalent to having the conditions $2a_1 + c_1 =0$ and  $2a_2 + c_2 =0$ simultaneously satisfied.  Therefore, combining this with the fact $(a,c)\in \overline{\mho}(\bar x, \bar y, \bar u, 0)$, the result follows.
\end{example}

Finally, to close this section, we would like to mention that we will occasionally require that the set-valued mapping $\Lambda$ or $S$ is locally single-valued and Lipschtz continuous. We do not specifically address this topic in this paper, as it is out of the scope of our work. But it is important to recall that in the case of $S$, many publications have addressed this question; see Subsections \ref{Single-valued optimal solution map} and \ref{Single-valued solution and multipliers maps}, where more details and references are provided. As for $\Lambda$, it is closely related to
$$
S_{KKT}(x):=\left\{(y,u)\in \mathbb{R}^m \times \mathbb{R}^p|\; u\in \Lambda(x,y)\right\},
$$
which has been widely studied in the literature; e.g., \cite{Robinson1981,dontchev1996characterizations}. Obviously, the coderivative calculations will easily show some close interplays between the two mappings, similarly, though much simpler than, to those between $S$ and $\Lambda$ provided in Proposition \ref{teclair}. More precisely, note that it is obvious that $\Lambda$ can be written as
$$
  \Lambda(x,y):=\left\{u\in \mathbb{R}^p|\; 0\in \psi(x,y,u) + \Phi(u)  \right\},
 $$
where the function $\psi$ and the set-valued mapping $\Phi$ are respectively defined by
 $\psi(x,y,u):= -\left[\nabla_y L(x,y,u)^\top, g(x,y)^\top\right]^\top$  and $\Phi(u):=\{0_m\}\times N_{\mathbb{R}^p_+}(u)$.
Many papers have been devoted to conditions ensuring that general mappings of the above form are locally single-valued and Lipschitz continuous; see, e.g., \cite{DontchevWilliam1994, Robinson1981}.
\section{Generalized Hessian estimates in the presence of convexity}\label{Generalized Hessian estimates}
\subsection{Case where the feasible set is unperturbed}\label{Case where the feasible set is unperturbed}
In this subsection, we assume that the feasible set of problem \eqref{OptValueFn} is independent of $x$.  More precisely, for $g: \mathbb{R}^m \rightarrow \mathbb{R}^p$, our attention here will be on the following function assumed to be finite-valued:
\begin{equation}\label{Danskin value function}
    \varphi(x):=\underset{y\in Y}\min~f(x,y) \; \mbox{ with }\; Y:=\left\{y\in \mathbb{R}^m|\; g(y)\leq 0\right\}.
\end{equation}
\begin{thm}\label{partial2phi} {\em If $Y$ is a convex and compact set and $f$ is concave-convex funtion, then for $\underline{x}\in \partial \varphi(\bar x)$ and $\underline{x}^*\in \mathbb{R}^n$, we have \eqref{man-no-lap3}. If additionally, the MFCQ \eqref{MFCQ} holds at $y$ and  $\nabla^2_{yx}f(\bar x, y)$ is full rank, for all $y\in S(\bar x)$ with $\nabla_x f(\bar x, y)=\underline{x}$, then for  $\underline{x}^*\in \mathbb{R}^n$,
\begin{equation}\label{Trieveler}
\begin{array}{l}
  \partial^2 \varphi(\bar x| \underline{x})(\underline{x}^*) \qquad \subseteq \qquad \underset{y\in S(\bar x): \;\, \underline{x}=\nabla_x f(\bar x, y)}\bigcup\;\; \underset{u\in \Lambda (\bar x, y)} \bigcup  \;\;\underset{(a, c)\in \mho(\bar x, y, u, 0)} \bigcup \\
                                                              \left\{\nabla^2_{xx}f(\bar x, y)\underline{x}^* + \nabla^2_{yx}f(\bar x, y)^\top a\right|
                                     \left. \nabla^2_{xy}f(\bar x, y)\underline{x}^* + \nabla^2_{yy}L(\bar x, y, u)a +  \nabla g(y)^\top c =0 \right\},
\end{array}
\end{equation}
where the sets  $\Lambda (\bar x, y)$ and $\mho(\bar x, y, u, 0)$ are defined in \eqref{Lambdadefined} and \eqref{yameogo}, respectively.
}
\end{thm}
\begin{proof}
Note that under the assumptions of the theorem, we have equality \eqref{copartialphi-no co} from Theorem \ref{subdifferentialEstimate}. This equality can equivalently be written as
$
    \partial \varphi(x) = \nabla_x f\circ\Psi(x),
$
where $\Psi(x):= \{x\}\times S(x)$. Further observe that the set-valued map $\Psi$ is closed, given that the counterpart of $S$ \eqref{S(x)} for \eqref{Danskin value function} can take the form $S(x):=\left\{y\in Y|\;  f(x,y)=\varphi(x)\right\}$, and is thus closed as $\varphi$ is locally Lipschitz continuous under the imposed continuous differentiability of the function $f$ and compactness of the set $Y$.
Also note that the set-valued mapping
$
M(x,z):= \left\{(a, b)|\, a=x, \; b\in S(x), \;\nabla_x f(a, b)=z\right\}
$
 is locally bounded around $(\bar x, \underline{x})$, given that
$
M(X\times Z) \subseteq X \times Y,
$
for some neighborhoods $X$ and $Z$ of $\bar x$ and $\underline{x}$, respectively, with assumed $X$ to be bounded.
Hence, we have
\begin{equation}\label{Hessian phi 1}
\partial^2 \varphi(\bar x| \underline{x})(\underline{x}^*) \subseteq \underset{y\in S(\bar x): \;\, \underline{x}=\nabla_x f(\bar x, y)}\bigcup\left[D^*\Psi(\bar x|\bar x, y)\left(\left[ \begin{array}{l}
                         \nabla^2_{xx}f(\bar x, y)\underline{x}^* \\
                          \nabla^2_{yx}f(\bar x, y)^\top\underline{x}^*
                       \end{array}
  \right]\right)  \right]
\end{equation}
from the chain rule in Theorem \ref{chain rule}. Finally, for the right-hand-side of \eqref{Hessian phi 1}, it follows that since $S$ is closed as shown above, applying Theorem \ref{pp1} to  $\Psi$ leads to
\begin{equation}\label{Cod Psi3}
    \begin{array}{l}
D^*\Psi\left(\bar x \left|\bar x, y\right.\right)\left(\left[ \begin{array}{l}
                         \nabla^2_{xx}f(\bar x, y)\underline{x}^* \\
                          \nabla^2_{yx}f(\bar x, y)^\top\underline{x}^*
                       \end{array}
  \right]\right) \subseteq \nabla^2_{xx}f(\bar x, y)\underline{x}^*  + D^*S(\bar x|y)\left(\nabla^2_{yx}f(\bar x, y)^\top\underline{x}^*\right)
\end{array}
\end{equation}
 given that the corresponding counterpart of qualification condition \eqref{QC-Cod} is automatically satisfied, as $0\in D^*S(\bar x|\bar y)(0)$ by the positive homogeneity of the coderivative mapping. Finally, inclusion \eqref{man-no-lap3} is obtained by combining \eqref{Hessian phi 1} and \eqref{Cod Psi3}. As for inclusion \eqref{Trieveler}, it obviously follows from \eqref{gouene1}. One can easily check that if the matrix $\nabla^2_{yx}f(\bar x, y)$ is full rank, then the qualification condition \eqref{CQfortheThing} holds.
\end{proof}
The tools involved in the calculations in this results are standard and can easily be checked; note in particular that requiring the matrix $\nabla^2_{yx}f(\bar x, y)$ is full rank helps to ensure that condition \eqref{CQfortheThing} is satisfied.  To illustrate this result, we consider the following example with a linear program having left-hand-side perturbation.
\begin{example}{
Consider the optimal value function
$$
\varphi(x):=\underset{y}\min~\left\{x^\top y|\; Ay\leq b\right\},
$$
  where $Y:=\left\{y\in \mathbb{R}^m|\; Ay\leq b\right\}$ is compact and $A$ is a full rank matrix.  For a couple $(\bar x, \underline{x})$, we have $\underline{x}\in \partial \varphi (\bar x)$ if and only if $\underline{x}\in S (\bar x)$. We also have  $\bar u=\lambda(\bar x, \underline{x})$, as $A$ is full rank. For $(\bar x, \underline{x}, \bar u)$, consider the definitions of $\nu$, $\eta$, and $\theta$ given in \eqref{multiplier sets} for the constraint set $Y$, let
 $$
\begin{array}{c}
  \Xi(A):= \big\{(a,c)|\;A_{\nu}a=0, \; c_{\eta}=0, \; \forall i\in \theta: \;\left(A_ia>0 \wedge c_i >0\right)\, \vee \,  c_i \left(A_i a\right)=0 \big\}.
\end{array}
$$
Further note that the qualification condition \eqref{CQfortheThing} holds at the point  $(\bar x, \underline{x}, \bar u)$, as $A$ has full rank.  It therefore follows from inclusion \eqref{Trieveler} that
$$
\partial^2\varphi(\bar x|\underline{x})(\underline{x}^*)  \subseteq \left\{a|\; (a, c)\in \Xi(A), \; A^\top c + \underline{x}^* =0\right\}.
$$
}\end{example}
To conclude this subsection, recall that different structures are possible for the set $\mho(\bar x, y, u, 0)$, which depends on the reformulation of the complementarity conditions defined in $\Lambda(\bar x, y)$, as discussed in Remark \ref{rouena}.


\subsection{Single-valued optimal solution map}\label{Single-valued optimal solution map}
We assume here that the optimal solution mapping $S$ \eqref{S(x)} is single-valued; i.e., $S:=s$. On the other hand, we let $\Lambda$ \eqref{Lambdadefined} be set-valued. We can then estimate the generalized Hessian of $\varphi$ \eqref{OptValueFn} as follows.

\begin{thm}\label{foulam}{\em Suppose that $\mbox{gph }K$ \eqref{K(x)}
is compact and  $S$ is single-valued (i.e., $S:=s$)  and Lipschitz continuous around $\bar x$. Furthermore, assume that problem \eqref{ParametricOptimization} is convex and the MFCQ holds at $(\bar x, \bar y)$ with $\bar y = s(\bar x)$ and the  qualification condition
\begin{equation}\label{QCDUAL}
    \left.
    \begin{array}{r}
     -a^*\in \partial \langle b^*, s\rangle (\bar x)\\
     (a^*, b^*)\in D^*\Lambda(\bar x, \bar y| u)(0)
    \end{array}
    \right\} \Longrightarrow \left\{\begin{array}{l}
                                     a^*=0\\
                                     b^*=0
                                    \end{array}
    \right.
\end{equation}
holds at $(\bar x, \bar y,  u)$ with $\bar y=s(\bar x)$ for any $u\in \Lambda(\bar x, \bar y)$. Then, for $\underline{x}\in \partial \varphi(\bar x)$ and any $\underline{x}^*\in \mathbb{R}^n$, we have the following estimate for the second order subdifferential of $\varphi$:
\begin{align}
\partial^2\varphi(\bar x|\underline{x})(\underline{x}^*) \subseteq  &  \underset{u\in \Lambda(\bar x, \bar y)}\bigcup \;\;\underset{(\zeta^*_x, \zeta^*_y)\in D^*\Lambda(\bar x, \bar y|  u)\left(\nabla_x g(\bar x, \bar y)\underline{x}^*\right)}\bigcup \left. \Big\{\nabla^2_{xx}L(\bar x, \bar y, u)\underline{x}^* + \zeta^*_x  \right.\nonumber\\
  & \qquad \qquad \qquad \qquad \qquad +\;\partial \left\langle\zeta^*_y + \nabla^2_{xy}L(\bar x, \bar y,  u)\underline{x}^*, \;\, s\right\rangle(\bar x)\Big\}.\label{Yespapa}
\end{align}
If additionally, we suppose that conditions \eqref{CQfortheThing} and \eqref{holala} are satisfied at $(\bar x, \bar y, u)$ for all $u\in \Lambda (\bar x, \bar y)$ with $\bar y=s(\bar x)$,   then the qualification condition \eqref{QCDUAL} holds at the relevant points and for $\underline{x}\in \partial \varphi(\bar x)$ and  $\underline{x}^*\in \mathbb{R}^n$, we have
\begin{equation}\label{Vanerone}
\begin{array}{l}
\partial^2\varphi(\bar x|\underline{x})(\underline{x}^*) \subseteq    \underset{u\in \Lambda(\bar x, \bar y)}\bigcup \;\;\underset{(a, c)\in \mho\left(\bar x, \bar y, u, \nabla_x g(\bar x, \bar y)\underline{x}^*\right)}\bigcup\\
 \qquad \qquad  \Big\{\nabla^2_{xx}L(\bar x, \bar y, u)\underline{x}^* + \nabla^2_{yx}L(\bar x, \bar y, u)a +  \nabla_xg(\bar x, \bar y)^\top c\\
   \qquad \qquad \qquad \qquad    + \partial \left\langle \nabla^2_{xy}L(\bar x, \bar y,  u)\underline{x}^* + \nabla^2_{yy}L(\bar x, \bar y, u)a +  \nabla_yg(\bar x, \bar y)^\top c, \;\, s \right\rangle(\bar x)\Big\}.
\end{array}
\end{equation}
}\end{thm}
\begin{proof}
 Obviously, under the assumptions made, we have equality \eqref{Sub-varphi}, which can be rewritten as  $\partial \varphi(x)  = \nabla_xL \circ \Phi \circ \psi (x)$ with  $\psi (x):=\left[x^\top, s(x)^\top \right]^\top$
 and $\Phi(a,b)   := \{(a,b)\}\times \Lambda(a,b)$. Consider the set-valued mapping
 $
 M(x,z):=\left\{(a, b, c)\in \Phi  \circ\psi (x)|\; \nabla_xL(a, b, c)=z \right\}
 $
 and a sequence $\{(x^k, z^k, a^k, b^k, c^k)\}$ with $(a^k, b^k, c^k)\in M(x^k, z^k)$. Then by definition,
\begin{equation}\label{ayoBaperi}
   \begin{array}{l}
  a^k=x^k, \; b^k=s(x^k), \; z^k= \nabla_x L(a^k, b^k, c^k),\\
  \nabla_y L(a^k, b^k, c^k)=0, \; c^k\geq 0, \; g(a^k, b^k)\leq 0, \; g(a^k, b^k)^\top c^k=0.
\end{array}
\end{equation}
Suppose that $x^k \rightarrow \bar x$, $z^k \rightarrow \underline{x}$ and $\|c^k\|\geq k$ for all $k\in \mathbb{N}$. Then $b^k \rightarrow s(\bar x)$, given that $s$ is assumed to be locally Lipschitz continuous around $\bar x$. We can find a subsequence of $\{c^k\}$, with the same notation, provided there is no confusion, such that $c^k/\|c^k\|$ converges to some $\bar c$ with $\|\bar c\|=1$.
Inserting this subsequence in the second line of \eqref{ayoBaperi} and dividing the terms containing $c^k$ by its norm, we arrive at
\begin{equation*}\label{tafou}
   \nabla_y g(\bar x, \bar y)^\top\bar u=0, \;\; \bar u\geq 0, \;\, g(\bar x, \bar y)\leq 0, \;\, \bar u^\top g(\bar x, \bar y)=0.
\end{equation*}
for the point $(\bar x, \bar y, \bar c)$, as $k \rightarrow \infty$.
Given that the MFCQ holds at $(\bar x, \bar y)$, it follows that we must have $\bar c =0$.
This contradicts the hypothesis that  $\|c^k\|\geq k$ for all $k\in \mathbb{N}$. Hence, $M$ is locally bounded around $(\bar x, \underline{x})$.
Therefore, by the chain rule in Theorem \ref{chain rule},
\begin{equation}\label{HessianPalo}
\partial^2 \varphi(\bar x| \underline{x})(\underline{x}^*) \subseteq \underset{\underset{\underline{x}=\nabla_x L(a, b, u)}{(a, b, u)\in \Phi \circ \psi (\bar x)}}\bigcup\left[D^*(\Phi \circ \psi )(\bar x|a, b, u)\left(\nabla(\nabla_x L)(a, b, u)^\top \underline{x}^*\right)\right]
\end{equation}
given that $\Phi \circ \psi$ is closed, as $s$ is locally Lipschitz continuity around $\bar x$ and the functions $f$ and $g$ are continuously differentiable. Next, note that the set-valued mapping defined by
$
M_{\circ}(x,z):=\left\{(a, b)|\; (a, b)= \psi (x), \; z\in \Phi  (a, b) \right\}
$
is locally bounded around all the points $(\bar x, \bar z)$ with $\bar z\in  \Phi  \circ \psi (\bar x)$ and $\nabla_x L(\bar z)=\underline{x}$, given that $s$ is Lipschitz continuous around $\bar x$ and for some neighborhoods
$X$, $A$, $B$, $C$ of $\bar x$, $\bar a$, $\bar b$, and $\bar c$, respectively, with $X$ being a bounded set while $A$ is a compact set, we have
$$
M(D)= \underset{(x, a, b, c)\in D}\bigcup \left\{(\alpha, \beta)| \alpha=a=x, \beta=b=s(x), c\in \Lambda(x,s(x)  \right\}\subseteq X\times s(X),
$$
where $D:=X\times A\times B\times C$. From the product rule in Theorem \ref{pp1}, it follows that for any $z^*=(x^*, y^*, u^*)$ and some $\bar u\in \Lambda (a, b)$ such that $\bar z=(a,b,\bar u)$, we have
\begin{equation}\label{QEstimate}
 D^*\Phi  (a,b|\bar z) (z^*) \subseteq \left[\begin{array}{c}
                                         x^*\\
                                         y^*
                                       \end{array}
\right] + D^*\Lambda(a,b| \bar u)(u^*),
\end{equation}
given that the counterpart of \eqref{QC-Cod} is automatically satisfied. This obviously leads to
$
D^*\Phi  (a,b|\bar z) (0) \subseteq D^*\Lambda(a,b| \bar u)(0).
$
Then considering the fact that
$$
\begin{array}{c}
\mbox{Ker }D^*\psi (\bar x| a,b) := \left\{(a^*, b^*)|\, 0\in \partial \langle (a^*, b^*), \psi  \rangle (\bar x)  \right\}= \left\{(a^*, b^*)|\, -a^* \in \partial \langle  b^*, s \rangle (\bar x)  \right\},
\end{array}
$$
it is clear that \eqref{QCDUAL} is sufficient for
$
D^*\Phi  (a,b|\bar z)(0) \cap \mbox{Ker }D^*\psi (\bar x| a,b)=\{0\}
$
to hold. Hence, from Theorem \ref{chain rule}, take any $(a, b, u)\in \Phi \circ \psi (\bar x)$ such that $\underline{x}=\nabla_x L(a, b, u)$,
\begin{equation}\label{HessianPalo1}
\begin{array}{l}
  D^*(\Phi  \circ \psi )(\bar x|a, b, c)\left(\nabla(\nabla_x L)(a, b, c)^\top \underline{x}^*\right) \\[1ex]
  \qquad \qquad  \qquad \qquad  \qquad \subseteq \underset{\zeta^*\in D^*\Phi  \left(\bar x, \bar y| \bar x, \bar y, u \right)\left(\nabla(\nabla_x L)(\bar x, \bar y, u)^\top \underline{x}^*\right)}\bigcup \partial \left\langle \zeta^*, \; \psi \right\rangle(\bar x),
\end{array}
\end{equation}
given that $(a, b, u)\in \Phi \circ \psi (\bar x)$ is equivalent to $a=\bar x$, $b=\bar y$ and $u\in \Lambda(\bar x, \bar y)$. Clearly, since
$
\partial \langle (a^*, b^*), \psi  \rangle (\bar x)= a^* + \partial \langle  b^*, s \rangle (\bar x),
$
we have \eqref{Yespapa} from a combination of inclusions \eqref{HessianPalo}, \eqref{QEstimate}, and \eqref{HessianPalo1}. As for inclusion \eqref{Vanerone}, it follows from the insertion of \eqref{Delanoe} in \eqref{Yespapa}. Also note that that \eqref{QCDUAL} is automatically satisfied if condition \eqref{holala} holds at the relevant points; i.e., $(\bar x, \bar y,  u)$ with $\bar y=s(\bar x)$ for any $u\in \Lambda(\bar x, \bar y)$. The latter follows from the fact that the fulfilment of condition \eqref{holala} at $(\bar x, \bar y,  u)$ ensures that we have $D^*\Lambda(\bar x, \bar y|  u)(0)=\{0\}$, by the coderivative criterion \eqref{coderivative criterion}.
\end{proof}

A few remarks are in order to clarify a few things in this result. At first, to guaranty that $S$ is single-valued and locally Lipschitz continuous around $\bar x$, one just needs to additionally impose that the Strong Second Order Sufficient Condition (SSOSC) and the constant rank constraint qualification (CRCQ) are satisfied; cf.   \cite{RalphDempe1995}. Recall that the SSOSC is said to hold at a point $(\bar x, \bar y)$ if for all $u\in \Lambda (\bar x, \bar y)$ and all $d\neq 0$ such that $\nabla_y g_i(\bar x, \bar y)^\top d=0$ if $i\in \eta \cup \nu$, we have
$
d^\top \nabla^2_{yy}L(\bar x, \bar y, u) d >0.
$
As for the CRCQ, it is said to hold at $(\bar x, \bar y)$ if there is a neighborhood $W$ of this point and any subset $I$ of $\{i=1, \ldots, p|\; g_i(\bar x, \bar y)=0\}$, the family of gradients $\{\nabla_y g_i(x, y)|\; (x,y)\in W\}$ has the same rank. To be precise, according to the latter reference, under the MFCQ, CRCQ, and SSOSC,  $s$ is in fact a piecewise continuously differentiable (PC$^1$) function. Furthermore, its Clarke generalized Jacobian can be obtained as
\begin{equation}\label{subs(x)}
\bar{\partial} s(\bar x) = \mbox{co }\left\{s^i(\bar x)|\; i \; \mbox{ s.t. }\; \bar x\in \mbox{cl int Supp }\left(s, s^i\right)  \right\},
\end{equation}
where \emph{cl int} denotes the closure of the interior of $\mbox{Supp }\left(s, s^i\right):=\left\{x|\; s(x)= s^i(x)\right\}$,
 cf. \cite[Chapter 4]{DempeFoundations}. A more detailed expression of \eqref{subs(x)} in terms of the corresponding problem data can be found in the latter reference. Further details on PC$^1$ functions can also be found in \cite{Scholtes1994}.

Secondly, note that a completely detailed upper estimate of $\partial^2\varphi(\bar x|\underline{x})(\underline{x}^*)$ based on  \eqref{Vanerone} is possible. One way to do this is simply to observe that the last term in the right-hand-side of the formula \eqref{Vanerone} is included in the set
$$
\bar{\partial} s(\bar x)^\top \left[\nabla^2_{xy}L(\bar x, \bar y,  u)\underline{x}^* + \nabla^2_{yy}L(\bar x, \bar y, u)a +  \nabla_yg(\bar x, \bar y)^\top c\right],
$$
where $\bar{\partial} s(\bar x)$ represents the Clarke generalized Jacobian of $s$ at $\bar x$. There are various results on the computation of $\bar{\partial} s(\bar x)$ in the literature; see \cite{DempeFoundations} and references therein. 

It is important to highlight the fact condition \eqref{QCDUAL} automatically holds if the mapping $\Lambda$ is Lipschitz-like holds at all the relevant points; cf. details in Section \ref{On the subdifferential}. Next, we provide an example showing that estimates in Theorem \ref{foulam} are still possible in some cases where condition \eqref{QCDUAL} fails.
\begin{example}\label{ExampleRalph}Consider the optimal value function
\begin{equation}\label{RalphDempeExample}
\varphi(x) = \underset{y}\min\left\{(y-1)^2|\; -x+y\leq 0, \; x+y\leq 0\right\},
\end{equation}
from \cite{RalphDempe1995}. We can easily check that the optimal solution map $S$ reduces to
$$
s(x)= \left\{\begin{array}{rll}
                          x & \mbox{ if } & x\leq 0, \\
                          -x & \mbox{ if } & x\geq 0. \\
                        \end{array}
             \right.
$$
The compactness of $\text{gph}\,K$, required in Theorem \ref{subdifferentialEstimate}(ii), does not hold. But \eqref{Sub-varphi} remain valid thanks to the continuity of the solution function $s$ and the fact the MFCQ holds at any feasible point. Problem \eqref{RalphDempeExample} is obviously convex in $y$ and calculations show that
$$
\Lambda(x, y)= \left\{\begin{array}{lll}
                          \left\{(0,\; 2(1-x))\right\} & \mbox{ if } & x < 0, \; y=x,\\
                          \left\{(2(1+x),\; 0)\right\} & \mbox{ if } & x > 0, \; y=-x,\\
                          \left\{(u_1, u_2)|\; u_1+u_2 = 2, \; u_1 \geq 0, \; u_2\geq 0\right\} & \mbox{ if } & x = 0, \; y=0.
                        \end{array}
             \right.
$$
We can easily show that condition \eqref{QCDUAL} fails. However, thanks to the fact that  $\Lambda$ is polyhedral, the detailed estimates (see \eqref{Yespapa}) for $\partial^2\varphi(\bar x|\underline{x})(\underline{x}^*)$ can still be obtained thanks to the calmness property; cf. discussions following Proposition \ref{teclair}.
\end{example}

\section{Generalized Hessian estimates in the absence of convexity}\label{Generalized Hessian estimates in the absence of convexity}
We assume here that the functions involved in \eqref{ParametricOptimization} are not necessarily convex.

\subsection{Single-valued optimal solution and multipliers maps}\label{Single-valued solution and multipliers maps}
We assume throughout this subsection that the optimal solution mapping $S$ \eqref{S(x)} and the Lagrange multipliers mapping $\Lambda$ \eqref{Lambdadefined} are all single-valued. 
Before we move to the general case, note that if $s$ is single-valued in \eqref{Danskin value function}, we can get the following result, where  the concave-convexity of $f$ or the qualification condition \eqref{pouolano} are not necessary.
\begin{thm}{\em
Consider the optimal value function $\varphi$ \eqref{Danskin value function} and let the corresponding optimal solution map $S$ \eqref{S(x)} be single-valued (i.e., $S:=s$) and Lipschitz continuous around $\bar x$, where $s(\bar x)=\bar y$. Then,
\begin{equation}\label{SingleValuedCase1}
    \partial^2 \varphi(\bar x)(\underline{x}^*) \subseteq \nabla^2_{xx}f(\bar x, \bar y) \underline{x}^* + \partial \left\langle\nabla^2_{xy}f(\bar x, \bar y) \underline{x}^*, s\right\rangle(\bar x).
\end{equation}
}\end{thm}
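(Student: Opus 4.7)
The plan is to identify the subdifferential $\partial \varphi$ with an explicit Lipschitz single-valued map around $\bar x$, and then compute its coderivative via scalarization followed by a standard smooth-plus-Lipschitz chain rule. First I would establish the envelope identity $\partial \varphi(x) = \{\nabla_x f(x, s(x))\}$ on a neighborhood of $\bar x$. Since $s$ is single-valued and Lipschitz, $\varphi(x) = f(x, s(x))$ is itself Lipschitz, and the optimality of $s(x)$ and $s(x+td)$ in the $x$-independent set $Y$ yields the two-sided estimate
\begin{equation*}
t\,\nabla_x f(x, s(x+td))^\top d + o(t) \,\leq\, \varphi(x+td) - \varphi(x) \,\leq\, t\,\nabla_x f(x, s(x))^\top d + o(t).
\end{equation*}
Continuity of $s$ and $C^1$-dependence of $\nabla_x f$ squeeze both sides to the same limit, so $\varphi$ is continuously differentiable near $\bar x$ with $\nabla \varphi(x) = \nabla_x f(x, s(x))$; in particular $\varphi$ is strictly differentiable there and $\partial \varphi$ coincides with the single-valued Lipschitz map $h(x) := \nabla_x f(x, s(x))$.

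By the definition \eqref{Definition of Hessian phi}, the generalized Hessian then reduces to $D^* h(\bar x)(\underline{x}^*)$. The Mordukhovich scalarization formula for coderivatives of Lipschitz single-valued mappings gives
\begin{equation*}
D^* h(\bar x)(\underline{x}^*) = \partial \bigl\langle \underline{x}^*, h \bigr\rangle(\bar x) = \partial \bigl[x \mapsto \Phi(x, s(x))\bigr](\bar x),
\end{equation*}
where $\Phi(x, y) := \langle \underline{x}^*, \nabla_x f(x, y) \rangle$ is a $C^1$ scalar function with $\nabla_x \Phi(\bar x, \bar y) = \nabla^2_{xx} f(\bar x, \bar y)\underline{x}^*$ and $\nabla_y \Phi(\bar x, \bar y) = \nabla^2_{xy} f(\bar x, \bar y)\underline{x}^*$. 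Applying the basic Mordukhovich chain rule for the composition of a smooth scalar function with a Lipschitz map, together with scalarization of $D^* s(\bar x)$, yields
\begin{equation*}
\partial [\Phi(\cdot, s(\cdot))](\bar x) \subseteq \nabla_x \Phi(\bar x, \bar y) + \partial \bigl\langle \nabla_y \Phi(\bar x, \bar y), s \bigr\rangle(\bar x),
\end{equation*}
which, after substituting the values of $\nabla_x \Phi$ and $\nabla_y \Phi$, is precisely the desired inclusion \eqref{SingleValuedCase1}.

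The main obstacle is the first step, namely producing the single-valued representation $\partial \varphi = \{\nabla_x f(\cdot, s(\cdot))\}$ without any convexity hypothesis on $f$ or $Y$, so that neither \eqref{copartialphi} nor \eqref{copartialphi-no co} can be invoked as a black box. The envelope-theorem argument sketched above is the workhorse of this step; once it is in place, the subsequent scalarization and chain-rule manipulations are routine within the Mordukhovich framework.
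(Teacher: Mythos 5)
Your proof is correct and follows essentially the same route as the paper: both identify $\partial\varphi$ with the single-valued Lipschitz map $x\mapsto\nabla_x f(x,s(x))$ and then estimate its coderivative through scalarization and the chain/product rules applied to the composition with $\psi(x)=(x^\top,s(x)^\top)^\top$. Your envelope-theorem justification of the identity $\partial\varphi(x)=\{\nabla_x f(x,s(x))\}$ is in fact more detailed than the paper's, which simply asserts $\partial\varphi=\nabla_x f\circ\psi$ before invoking Theorems \ref{chain rule} and \ref{pp1}.
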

\begin{proof} Consider the function $\psi: \mathbb{R}^n \rightarrow \mathbb{R}^{n+m}$ defined  by $\psi(x):= (x^\top, s(x)^\top)^\top$. Then $\partial\varphi(x):= \nabla_x f \circ \psi(x)$ and we can check that
$$
\begin{array}{rll}
 \partial^2 \varphi(\bar x)(x^*) & \subseteq & \partial \left\langle \left[\begin{array}{c}
                                                                        \nabla^2_{xx}f(\bar x, \bar y)x^*\\
                                                                         \nabla^2_{xy}f(\bar x, \bar y)x^*
                                                                      \end{array}
  \right], \; \psi\right\rangle(\bar x)\\[2ex]
  & \subseteq & \nabla^2_{xx}f(\bar x, \bar y)x^* + \partial \left\langle\nabla^2_{xy}f(\bar x, \bar y) x^*, s\right\rangle(\bar x),
\end{array}
$$
while respectively using the chain and product rules in Theorems \ref{chain rule} and \ref{pp1}.
\end{proof}
Obviously, if $s$ is single-valued and continuously differentiable at $\bar x$,  we get the equality in \eqref{man-no-lap2}. Also, it can be useful to observe that  \eqref{man-no-lap1} provides an upper bound for the generalized Hessian of $\varphi$ which is looser than the one in \eqref{SingleValuedCase1}; cf. \eqref{Clarke Subifferential}.
Next, we show that under additional assumptions, inclusion \eqref{nemama} is valid in the case where the constraint function $g$  effectively depends on both $x$ and $y$.
\begin{thm}\label{inique}{\em Consider $\varphi$ \eqref{OptValueFn} and suppose that $\mbox{gph }K$ is compact and the MFCQ holds at $(\bar x, y)$, for all $y\in S(\bar x)$. Further assume that the mappings $S$ and $\Lambda$ are single-valued (i.e., $S:=s$ and $\Lambda:=\lambda$) and Lipschitz continuous around $\bar x$ and $(\bar x, \bar y)$, respectively,  with $\bar y= s(\bar x)$ and $\bar u=\lambda(\bar x, \bar y)$. Then, we have inclusion \eqref{nemama}.
}\end{thm}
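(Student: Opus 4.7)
\emph{Plan.} The plan is to exploit the observation that under the current hypotheses, the Mordukhovich subdifferential of $\varphi$ collapses to a single-valued locally Lipschitz function, so that the generalized Hessian becomes the coderivative of an explicit composition; this composition can then be unravelled by nested applications of the chain and product rules of Theorems~\ref{chain rule} and~\ref{pp1}, together with the scalarization formula for coderivatives of single-valued Lipschitz maps.

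\emph{Step 1 (reducing the subdifferential).} Since $\mbox{gph}\,K$ is compact, MFCQ holds at $(\bar x,\bar y)$, and both $S=s$ and $\Lambda=\lambda$ are single-valued near the base point, all unions and convex hulls in the formulas of Theorem~\ref{subdifferentialEstimate} reduce to a single element, yielding for $x$ close to $\bar x$
$$
\partial\varphi(x)=\{\Phi(x)\},\qquad \Phi(x):=\nabla_xL\bigl(x,\,s(x),\,\lambda(x,s(x))\bigr).
$$
Because $s,\lambda$ are Lipschitz and $\nabla_xL$ is $C^1$ (the functions $f,g$ being $C^2$), $\Phi$ is locally Lipschitz. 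Consequently $\partial^2\varphi(\bar x|\underline x)(\underline x^*)=D^*\Phi(\bar x)(\underline x^*)$, and it suffices to estimate this coderivative.

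\emph{Steps 2--3 (unravelling $\Phi$).} Write $\Phi=h\circ R$ with the $C^1$ map $h(x,y,u):=\nabla_xL(x,y,u)$ and the Lipschitz map $R(x):=(x,\,s(x),\,\lambda(x,s(x)))\in\mathbb{R}^{n+m+p}$. Strict differentiability of $h$ trivialises the qualification condition~\eqref{papasal} and gives local boundedness of the auxiliary mapping $M$, so Theorem~\ref{chain rule} collapses to
$$
D^*\Phi(\bar x)(\underline x^*)\subseteq D^*R(\bar x)\bigl(\nabla h(\bar x,\bar y,\bar u)^\top\underline x^*\bigr),
$$
the argument on the right being $\bigl(\nabla^2_{xx}L\,\underline x^*,\ \nabla^2_{xy}L\,\underline x^*,\ \nabla_xg\,\underline x^*\bigr)\in\mathbb{R}^n\times\mathbb{R}^m\times\mathbb{R}^p$. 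I would then unfold $R$: peel off the identity by writing $R=(\mathrm{id},\sigma)$ with $\sigma(x):=(s(x),\lambda(x,s(x)))$ and invoke Theorem~\ref{pp1} to obtain $D^*R(\bar x)(v_1,v_2)\subseteq v_1+D^*\sigma(\bar x)(v_2)$, the QC~\eqref{QC-Cod} being immediate. A further decomposition $\sigma=R'\circ T$ with $T(x):=(x,s(x))$ and $R'(x,y):=(y,\lambda(x,y))$, followed by one more application of the chain rule and the product rule on both $R'$ and $T$, together with the scalarization identities $D^*s(\bar x)(w)=\partial\langle w,s\rangle(\bar x)$ and $D^*\lambda(\bar x,\bar y)(w)=\partial\langle w,\lambda\rangle(\bar x,\bar y)$ available for single-valued Lipschitz maps, yields
$$
D^*\sigma(\bar x)(\alpha,\beta)\subseteq\bigcup_{(\zeta^*_x,\zeta_y)\in\partial\langle\beta,\lambda\rangle(\bar x,\bar y)}\bigl[\zeta^*_x+\partial\langle\alpha+\zeta_y,\,s\rangle(\bar x)\bigr].
$$
Specialising to $\alpha=\nabla^2_{xy}L\,\underline x^*$ and $\beta=\nabla_xg\,\underline x^*$ and substituting back gives the right-hand side of~\eqref{nemama}.

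\emph{Main obstacles.} The main technical burden will be the bookkeeping of the qualification conditions~\eqref{papasal}, \eqref{QC-Cod} and the local boundedness of the auxiliary mappings $M$ arising at each use of the chain or product rule. All should follow from the Mordukhovich criterion~\eqref{coderivative criterion}, which under the standing Lipschitz continuity of $s$ and $\lambda$ yields $D^*s(\bar x)(0)=\{0\}$ and $D^*\lambda(\bar x,\bar y)(0)=\{0\}$; combined with the strict differentiability of the outer map $h$, these validate every intermediate QC. A secondary subtlety is justifying the singleton reduction of Step~1 under MFCQ rather than LICQ, but this is immediate once both $S$ and $\Lambda$ are assumed single-valued.
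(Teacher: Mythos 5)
Your proposal is correct and follows essentially the same route as the paper: both reduce $\partial\varphi$ to the single-valued Lipschitz composition $x\mapsto\nabla_xL(x,s(x),\lambda(x,s(x)))$ via the Gauvin--Dubeau subdifferential formula (the convex hull and unions collapsing by single-valuedness of $s$ and $\lambda$), and then peel the composition apart with the scalarized chain rule of Theorem~\ref{chain rule} and the product rule of Theorem~\ref{pp1}, the qualification conditions being discharged by the smoothness of $\nabla_xL$ and the Lipschitz continuity of $s$ and $\lambda$. Your factorization of the inner map as $(\mathrm{id},\sigma)$ with $\sigma=R'\circ T$ differs only cosmetically from the paper's $\phi\circ\psi$ and yields the identical estimate~\eqref{nemama}.
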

\begin{proof} It is clear that with  $\mbox{gph }K$  compact and the MFCQ satisfied at $(\bar x, y)$, for all $y\in S(\bar x)$, we have from \cite{GauvinDubeau1982} that
\begin{equation}\label{subphi-KM}
   \bar{\partial} \varphi(x) \subseteq \mbox{ co }\underset{y\in S(x)}\bigcup \; \underset{u\in \Lambda (x,y)}\bigcup\; \left\{\nabla_x f(x,y) + \nabla_x g(x,y)^\top u \right\}
\end{equation}
holds near $\bar x$. $S$ and $\Lambda$ being both single-valued   around $\bar x$ and $(\bar x, \bar y)$, respectively, it follows from inclusion \eqref{subphi-KM} that we have
$
\partial \varphi(x)  =  \nabla_xL \circ \phi \circ \psi(x)
$
with $\psi$ and $\phi$ respectively defined by $\phi(a, b):=[a^\top, b^\top, \lambda(a,b)^\top]^\top$ and $\psi(x):=[x^\top, s(x)^\top]^\top$. Hence,
\begin{align}
\partial^2 \varphi(\bar x| x)(x^*) = & \;\, \partial \langle x^*, \nabla_xL \circ \phi \circ \psi\rangle (\bar x) \label{nema1}\\
                                                                        \subseteq  &   \;\, \partial \langle \nabla(\nabla_xL ) (\phi \circ \psi(\bar x))^\top x^*,  \; \phi \circ \psi \rangle   (\bar x) \label{nema2}\\
                                                                        \subseteq  &  \;\,  \underset{y^* \in \partial \langle \nabla(\nabla_xL ) (\phi \circ \psi(\bar x))^\top x^*,  \;\phi \rangle   (\psi(\bar x))}\bigcup \partial \langle y^*,  \psi \rangle   (\bar x) \label{nema3}
\end{align}
since the function $\nabla_xL$ is differentiable and $\phi$ and $\psi$ are Lipschitz continuous around $\psi(\bar x)$ and $\bar x$, respectively. One can easily observe that \eqref{nema2}--\eqref{nema3} result from the chain rule in Theorem \ref{chain rule}. It now remains to evaluate the subdifferentials involved in \eqref{nema3}. In fact, we can easily check that the following equalities hold:
\begin{align}
\nabla(\nabla_xL ) (\phi \circ \psi(\bar x))^\top x^* =& \;\, \left[\begin{array}{c}
                                                                  \nabla^2_{xx}L(\bar x, s(\bar x), \lambda(\bar x, s(\bar x)))x^* \\
                                                                  \nabla^2_{xy}L(\bar x, s(\bar x), \lambda(\bar x, s(\bar x)))x^* \\
                                                                  \nabla_x g(\bar x, s(\bar x))x^*
                                                                \end{array}
 \right], \label{nema4}\\
 \partial \langle \zeta^*, \phi\rangle(a,b) = & \;\, \left[\begin{array}{c}
                                                                 \zeta^*_x
                                                                  \zeta^*_y
                                                               \end{array}
  \right] + \partial \langle \zeta^*_u, \lambda\rangle(a,b), \label{nema5}\\
  \partial \langle \zeta^*, \psi\rangle(\bar x) = &  \;\, \zeta^*_x + \partial \langle \zeta^*_y, s \rangle(\bar x). \label{nema6}
\end{align}
It then follows from \eqref{nema4} and \eqref{nema5} that we have
$$
\begin{array}{l}
  y^* \in \partial \left\langle \nabla(\nabla_xL ) (\phi \circ \psi(\bar x))^\top x^*,  \;\phi \right\rangle   (\psi(\bar x))\\
   \qquad \qquad \qquad\qquad \qquad \qquad \Longleftrightarrow \left\{\begin{array}{l}
              y^*=     \left[\begin{array}{c}
                                                                  \nabla^2_{xx}L(\bar x, s(\bar x), \lambda(\bar x, s(\bar x)))x^* \\
                                                                  \nabla^2_{xy}L(\bar x, s(\bar x), \lambda(\bar x, s(\bar x)))x^*
                                                                \end{array}
 \right] + \zeta^*\\
  \mbox{with } \zeta^*\in \partial \langle \nabla_x g(\bar x, s(\bar x))x^*, \;\,\lambda \rangle (\bar x, \bar y).
                                                                                                                                                    \end{array}
 \right.
\end{array}
$$
Substituting this, together with \eqref{nema6}, in \eqref{nema3},  we arrive at inclusion \eqref{nemama}.
\end{proof}
This result is clearly a special case of Theorem \ref{foulam}, as both estimates will coincide if $\Lambda$ is locally single-valued and Lipschitz continuous. Note that the latter property ensures that qualification condition \eqref{QCDUAL} automatically holds.
If in addition to the assumptions made in Theorem \ref{inique}, we suppose that the functions  $s$ and $\lambda$ are differentiable at $\bar x$ and $(\bar x, \bar y)$, respectively, then we have equality \eqref{mamba}.  To ensure that the latter holds, recall that we have already discussed in Subsection \ref{Single-valued optimal solution map} how to get the function $s$ locally single-valued and Lipschitz continuous. If stronger assumptions are made, i.e., the SSOSC, LICQ, and  strict complementarity condition (i.e., $\theta=\emptyset$) for $(\bar x, \bar y)$, then the optimal solution set-valued mapping $S$ \eqref{S(x)} and the Lagrange multiplier mapping $u$ (as a function of just $x$) are locally unique and based on the implicit function theorem, their derivatives can be obtained from the system
\begin{equation}\label{ImplicitFnThm}
\begin{array}{c}
   \left[
\begin{array}{cccc}
  \nabla^2_{yy}L(\bar x, \bar y, \bar u) & \nabla_y g_1(\bar x, \bar y)^\top & \ldots & \nabla_y g_p(\bar x, \bar y)^\top \\
  \bar u_1 \nabla_y g_1(\bar x, \bar y)& g_1(\bar x, \bar y)& \ldots & O\\
  \vdots & \vdots & \ddots &  \\
  \bar u_p \nabla_y g_p(\bar x, \bar y) & O & \ldots & g_p(\bar x, \bar y)
\end{array}
\right]\left[\begin{array}{c}
               \nabla s(\bar x)\\
               \nabla u(\bar x)
             \end{array}
 \right]  = \left[
\begin{array}{c}
  \nabla^2_{xx}L(\bar x, \bar y, \bar u) \\
  \bar u_1 \nabla_x g_1(\bar x, \bar y)\\
  \vdots\\
  \bar u_p \nabla_x g_p(\bar x, \bar y)
\end{array}
\right]
\end{array}
\end{equation}
cf. \cite[Chapter 3]{FiaccoBook1983}. Under additional invertibility assumptions, complete expressions of $\nabla s(\bar x)$ can be written in terms of the problem data; see, e.g., \cite[Chapter 7]{ShimizuIshizukaBardBook1997}.

\subsection{Set-valued optimal solution map}\label{Set-valued optimal solution map}
We start here by considering the case $\varphi$ \eqref{Danskin value function}, but while further assuming that the involved functions are not necessarily convex. Unlike in the previous subsection, we let $S$ \eqref{S(x)} be set-valued throughout this subsection. We denote by
$$
\begin{array}{rl}
  \Gamma^{\circ} (\bar x, \underline{x}):= \left\{\left.(a,z)\in \mathbb{R}^{n+1}\times \prod^{n+1}_{s=1}\mathbb{R}^{m}\right|\right.& \; a\geq 0, \; \sum^{m+1}_{s=1}a_s=1, \\ & \left.\sum^{m+1}_{s=1}a_s\nabla_x f(\bar x, z^s)=\underline{x}, \; z\in \prod^{n+1}_{s=1}S(\bar x)\right\}
\end{array}
$$
and
$$
\Delta^{\circ} (\bar x, z^s):= \left\{y\in \mathbb{R}^m|\; y\in S(\bar x):\; \nabla_x f(\bar x, y)= \nabla_x f(\bar x, z^s)  \right\}.
$$
\begin{thm}\label{mekouna}{\em
Consider the optimal value function $\varphi$ \eqref{Danskin value function} and let $Y$ be a compact set. Furthermore, consider a point $(\bar x, \underline{x})$ such that $\underline{x}\in \partial \varphi(\bar x)$ and the implication
\begin{equation}\label{pouolano}
\begin{array}{c}
\left[ \sum^{m+1}_{s=1}v^s=0, \; v^s\in \underset{y\in \Delta^{\circ} (\bar x, z^s)}\bigcup D^*S\left(\bar x| y\right)(0), \; s=1, \ldots, n+1\right] \\[2ex] \qquad \qquad \qquad \qquad \qquad\qquad \qquad \qquad \qquad\qquad \qquad \Longrightarrow v^1=\ldots = v^{n+1}=0
\end{array}
\end{equation}
 is satisfied at all $(a, z)\in \Gamma^{\circ}(\bar x, \underline{x})$. Then, for all $\underline{x}^*\in \mathbb{R}^n$, it holds that
$$
 \begin{array}{l}
 \partial^2 \varphi(\bar x| \underline{x})(\underline{x}^*) \subseteq \\[1.5ex]
 \qquad \underset{(a, z)\in \Gamma^{\circ} (\bar x| \underline{x})}\bigcup \left\{\sum^{n+1}_{s=1} \left[ \underset{ y\in \Delta^{\circ} (\bar x, z^s)} \bigcup \left[a_s\nabla^2_{xx}f(\bar x, y)\underline{x}^* + D^*S(\bar x|y)\left(a_s\nabla^2_{xy}f(\bar x, y)\underline{x}^*\right)\right]\right]\right\}.
\end{array}
$$
}\end{thm}
\begin{proof}
Let us first recall that under the compactness of $Y$ and the continuously differentiability of $f$, we have from \eqref{copartialphi} that $\bar \partial \varphi(x)= \mbox{co } \nabla_x f \circ \Psi(x)$ with $\Psi(x):= \{x\}\times S(x)$. Next, recall that $\Psi$ is closed, following the discussion in the proof of Theorem \ref{partial2phi}. Furthermore, one can easily check that $\Psi$ is locally bounded around any point in $\mathbb{R}^n$.
Now, consider a sequence $\{(a^k, c^k)\}$ with $c^k\in \nabla_x f \circ \Psi(a^k)$ such that $a^k \rightarrow \bar a$ and $c^k \rightarrow \bar c$. Obviously, we can find a sequence $\{b^k\}$ with $b^k \in \Psi(a^k)$ such that $\nabla_x f (b^k) =c^k$ for all $k$. By the local boundedness of $\Psi$, $\{b^k\}$ admits  a convergent subsequence that we denote similarly, provided there is no confusion. By the closedness of $\Psi$, we have $b^k \rightarrow \bar b\in \Psi(\bar a)$, for some $\bar b \in \mathbb{R}^{n+m}$. Additionally, note that $\nabla_x f(\bar b) = \bar c$, as $f$ is assumed to be continuously differentiable throughout this paper. Thus $\bar c\in \nabla_xf\circ\Psi(\bar a)$; confirming that $\nabla_xf \circ \Psi$ is a closed set-valued mapping.
Next, we consider a sequence $x^k \rightarrow \bar x$ and any sequence $z^k\in \nabla_xf \circ \Psi (x^k)$. Then, we can find a sequence $\{y^k\}$ such that $y^k\in S(x^k)$ and $z^k= \nabla_xf(x^k, y^k)$. By definition of the counterpart of $S$ \eqref{S(x)} for \eqref{Danskin value function}, it follows that $y^k\in Y$ for all $k$. Hence, as $Y$ is compact, it follows from the well-known Bolzano-Weierstrass Theorem that  $\{y^k\}$ has a convergent subsequence, that we denote similarly, provided there is no confusion. Let $\bar y$ be the limit of this subsequence. Then, we have $\bar y\in S( \bar x)$, given that $S$ is closed as shown in Theorem \ref{partial2phi}. Subsequently, as  $f$ is continuously differentiable, the sequence $\{z^k\}$ converges to $\nabla_x f(\bar x, \bar y)$. This shows that $\nabla_xf \circ \Psi$ is locally bounded around $\bar x$.
It then follows from Proposition \ref{transmon} that if the counterpart of \eqref{QC-Cod} for the mapping in \eqref{copartialphi} holds at all $(a,b)\in \Gamma(\bar x, \underline{x})$, it holds that
\begin{equation}\label{gheje}
    \partial^2 \varphi(\bar x| \underline{x})(\underline{x}^*) \subseteq \underset{(a, b)\in \Gamma (\bar x| \underline{x})}\bigcup \left[\sum^{n+1}_{s=1} D^*\left(\nabla_xf\circ \Psi\right)\left(\bar x| b_s\right)\left(a_s \underline{x}^*\right)\right].
\end{equation}
Note that the mapping $\Gamma$ in \eqref{gheje} corresponds to the counterpart of \eqref{dfort} for the convex hull set-valued mapping in \eqref{copartialphi}. \eqref{gheje} leads to the estimate in the theorem, considering inclusions \eqref{Hessian phi 1} and \eqref{Cod Psi3} and the fact that $(a,b)\in \Gamma(\bar x, \underline{x})$ if and only if
$$
\exists z \, \mbox{ s.t. }\, (a,z)\in \Gamma^{\circ}(\bar x, \underline{x}) \mbox{ with } b=\left[\begin{array}{c}
\nabla_x f(\bar x, z^1)^\top, \ldots, \nabla_x f(\bar x, z^{n+1})^\top                                                                                                                                                          \end{array}
\right]^\top.
$$
Finally, observe that \eqref{pouolano} is a sufficient condition for  the  counterpart of qualification condition \eqref{QC-Cod} for \eqref{copartialphi} to hold.
\end{proof}
Observe that based on the coderivative criterion \eqref{coderivative criterion}, the qualification condition \eqref{pouolano} is automatically satisfied if  $S$ is Lipschitz-like around $(\bar x, y)$ for all $y\in \Delta(\bar x, z^s)$, $(a, z)\in \Gamma^{\circ}(\bar x, \underline{x})$. Also, it is clear that the estimate of generalized Hessian of $\varphi$ obtained in Theorem  \ref{partial2phi} is much tighter than the one derived in Theorem \ref{mekouna}.

Similarly to Theorem \ref{mekouna}, we now consider $\varphi$ in the general case Consider \eqref{OptValueFn}, but without the convexity assumption. Then we have the following result, where
$$
\begin{array}{l}
  \Gamma^{\lambda} (\bar x, \underline{x}):= \Big\{\left.(a,z, w)\in \mathbb{R}^{n+1}\times \prod^{n+1}_{s=1}\mathbb{R}^{m} \times \prod^{n+1}_{s=1}\mathbb{R}^{p}\right|\; a\geq 0, \; \sum^{n+1}_{s=1}a_s=1,\\
  \qquad \qquad \qquad \qquad  \sum^{n+1}_{s=1}a_s\nabla_x L(\bar x, z^s, w^s)=\underline{x}, \; z\in \prod^{n+1}_{s=1}S(\bar x), \; w\in \prod^{n+1}_{s=1}\left\{\lambda(\bar x, z^s)\right\}\Big\}
\end{array}
$$
and
$$
\Delta^\lambda (\bar x, z^s, w^s):= \Big\{(y,u)|\; y\in S(\bar x), \;\;u=\lambda(\bar x, y), \;\nabla_x L(\bar x, y, u)= \nabla_x L(\bar x, z^s, w^s) \Big\}.
$$
\begin{thm}\label{fonii}{\em Consider \eqref{OptValueFn} and suppose that $\mbox{gph }K$ is compact and the LICQ holds at all $(x, y)\in \mbox{gph }S$. Further assume that $\Lambda$ is single-valued  and Lipschitz continuous around $(\bar x, y)$ with $u =\lambda(\bar x, y)$ for all $y\in S(\bar x)$ such that $ \nabla_x L(\bar x, y, u)=\underline{x}$. Furthermore, let $S$ \eqref{S(x)} be closed and  locally bounded around $\bar x$, and the qualification condition
\begin{equation}\label{condre12}
\begin{array}{c}
\left[ \sum^{m+1}_{s=1}v_s=0, \; v_s\in \underset{y\in \Delta (\bar x, z^s, w^s)}\bigcup D^*S\left(\bar x| y\right)(0), \; s=1, \ldots, n+1\right] \\ \qquad \qquad \qquad \qquad\qquad \qquad \qquad \qquad\qquad \Longrightarrow v_1=\ldots = v_{n+1}=0
\end{array}
\end{equation}
 holds at all $(a, z, w)\in \Gamma^{\lambda}(\bar x, \underline{x})$, where $\underline{x}\in \varphi(\bar x)$. Then, for all $\underline{x}^*\in \mathbb{R}^n$, it holds that
\begin{align}
\varphi^2(\bar x|\underline{x})(\underline{x}^*) \subseteq  &\underset{(a,z,w)\in \Gamma^{\lambda}(\bar x, \underline{x})} \bigcup \Bigg\{\sum^{n+1}_{s=1} \Bigg[ \underset{(y, u)\in \Delta^{\lambda}(\bar x, z^s, w^s)}\bigcup \;\;\underset{(\zeta^*_x, \zeta^*_y)\in \;a_s\partial \langle \nabla_x g(\bar x, y)x^*, \;\,\lambda \rangle (\bar x, y)}\bigcup \nonumber\\[1ex]
  &   \left[a_s\nabla^2_{xx}L(\bar x, y, u)\underline{x}^* + \zeta^*_x +   D^*S(\bar x|y)\left(\zeta^*_y + a_s\nabla^2_{xy}L(\bar x, y, u)\underline{x}^*\right)\right]\Bigg]\Bigg\}.\nonumber
\end{align}
}\end{thm}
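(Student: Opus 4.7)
The proof mirrors the hybrid strategy that extended Theorem \ref{partial2phi} to Theorem \ref{mekouna}, combined with the ingredients already used in Theorem \ref{single-valued mult theorem}. The plan is to start from \eqref{subphi-KM-II} (which, unlike \eqref{subphi-KM-II-NoCo}, does not require the concave-convex structure on problem \eqref{ParametricOptimization}) and write, on a neighborhood of $\bar x$, $\bar\partial \varphi(x) = \mbox{co}\, F(x)$ with $F := \nabla_x L \circ \phi \circ \Psi$, where $\phi(a,b):=\left[a^\top,\, b^\top,\, \lambda(a,b)^\top\right]^\top$ and $\Psi(x):=\{x\} \times S(x)$, exactly as in the proof of Theorem \ref{single-valued mult theorem}. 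The Carath\'eodory parametrization \eqref{dfort} then takes the form $b^s = \nabla_x L(\bar x, z^s, w^s)$ with $z^s \in S(\bar x)$ and $w^s = \lambda(\bar x, z^s)$, so that the index set prescribed by Proposition \ref{transmon} coincides (after this reparametrization) with $\Gamma^{\lambda}(\bar x, \underline{x})$, and the fiber $F^{-1}(b^s)\cap \Psi(\bar x)$ coincides with $\Delta^{\lambda}(\bar x, z^s, w^s)$.

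First, I would verify the hypotheses of Proposition \ref{transmon}: closedness of $F$ follows from the closedness and local boundedness of $S$, the Lipschitz continuity of $\lambda$, and the continuity of $\nabla_x L$, via the convergent-subsequence argument already carried out for $\nabla_x f \circ \Psi$ in the proof of Theorem \ref{mekouna}; local boundedness of $F$ around $\bar x$ is immediate from the local boundedness of $S$ together with the continuity of $\nabla_x L \circ \phi$. Invoking Proposition \ref{transmon} would then yield the inclusion
\begin{equation*}
\partial^2 \varphi(\bar x | \underline{x})(\underline{x}^*) \subseteq \bigcup_{(a,z,w)\in \Gamma^{\lambda}(\bar x, \underline{x})}\;\sum_{s=1}^{n+1} D^*F(\bar x \,|\, \nabla_x L(\bar x, z^s, w^s))(a_s \underline{x}^*),
\end{equation*}
provided the corresponding counterpart of \eqref{QC-Cod} holds at every point of $\Gamma^{\lambda}(\bar x, \underline{x})$.

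Next, I would reduce each term $D^*F(\bar x\,|\, b^s)(a_s\underline{x}^*)$ by redoing the chain-rule computation from the proof of Theorem \ref{single-valued mult theorem}, this time applied pointwise at $b^s$. The auxiliary mapping $M(x,z):=\{(a,b)\in \Psi(x):\, \nabla_x L\circ \phi(a,b)=z\}$ is locally bounded around $(\bar x, b^s)$ since $S$ is locally bounded around $\bar x$, hence Theorem \ref{chain rule} applies and produces the union over $(y,u)\in \Delta^{\lambda}(\bar x, z^s, w^s)$ of $D^*\Psi(\bar x \,|\, \bar x, y)$ composed with $\partial\langle a_s \underline{x}^*,\, \nabla_xL\circ \phi\rangle(\bar x, y)$. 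Expanding the latter subdifferential by the smooth chain rule on $\nabla_x L$ and the Lipschitz product rule from Theorem \ref{pp1} on $\phi$ delivers exactly the pair $(\zeta^*_x,\zeta^*_y) \in a_s\,\partial\langle \nabla_x g(\bar x,y)\underline{x}^*,\,\lambda\rangle(\bar x,y)$ together with the Hessian contributions $a_s \nabla^2_{xx}L(\bar x,y,u)\underline{x}^*$ and $\zeta^*_y + a_s\nabla^2_{xy}L(\bar x,y,u)\underline{x}^*$; finally, $D^*\Psi(\bar x\,|\,\bar x,y)$ collapses to $D^*S(\bar x\,|\,y)$ applied to the second block (cf.\ \eqref{Cod Psi3}), which assembles the announced estimate.

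The main obstacle I expect is the verification that hypothesis \eqref{condre12} is the correct translation of the qualification condition demanded by Proposition \ref{transmon} for $F$. Concretely, using the coderivative criterion \eqref{coderivative criterion} together with the Lipschitz continuity of $\lambda$ and the smoothness of $\nabla_x L$, one obtains $D^*F(\bar x\,|\, b^s)(0) \subseteq \bigcup_{(y,u)\in \Delta^{\lambda}(\bar x, z^s, w^s)} D^*S(\bar x\,|\, y)(0)$, because the Jacobian contributions from the smooth outer layer vanish at zero and the Lipschitz inner layer $\phi$ contributes nothing to the kernel beyond what $S$ already contributes. Substituting this inclusion into the counterpart of \eqref{QC-Cod} (with $\Psi_s := F$ for $s=1,\dots,n+1$) gives precisely \eqref{condre12}. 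Once this reduction is in place, the remaining assembly is purely mechanical bookkeeping over $s=1,\dots,n+1$.
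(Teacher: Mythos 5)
Your proposal is correct and follows essentially the same route as the paper: the author's own proof simply notes that $\nabla_x L\circ\phi\circ\Psi$ is closed and locally bounded (via compactness of $\mbox{gph }K$, closedness of $S$, Lipschitz continuity of $\phi$, and Bolzano--Weierstrass) and then says the argument ``follows the steps of that of Theorem \ref{mekouna}'', which is precisely your plan of applying Proposition \ref{transmon} to the convex-hull representation from \eqref{subphi-KM-II} and then reducing each $D^*F(\bar x|b^s)(a_s\underline{x}^*)$ by the chain/product-rule computation of Theorem \ref{single-valued mult theorem}. Your write-up actually supplies more detail than the paper does, in particular the explicit identification of $\Gamma^{\lambda}$ and $\Delta^{\lambda}$ with the Carath\'eodory data and the verification that \eqref{condre12} implies the counterpart of \eqref{QC-Cod}.
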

\begin{proof} Based on the compactness of $\mbox{gph }K$, the closedness of $S$ and the Bolzano-Weierstrass Theorem we can easily show that  $\nabla_x L \circ \phi \circ \Psi$ is  closed and locally bounded around $\bar x$, thanks to the locally Lipschitz continuity of $\phi$. The rest of the proof follows the steps of that of Theorem \ref{mekouna}.
  \end{proof}
Similarly to the previous result, the qualification condition \eqref{condre12} is automatically satisfied if  $S$ is Lipschitz-like around $(\bar x, y)$ for all $y\in \Delta(\bar x, z^s)$, $(a, z, w)\in \Gamma^{\lambda}(\bar x, \underline{x})$. The only remaining thing to clarify here is how to estimate the coderivative of $S$, as well as ensuring that $S$ is Lipschitz-like, in the absence of convexity. To proceed, we simply restate the following result from \cite[Theorem 5.9]{DempeMordukhovichZemkohoTwo-level}.
\begin{thm}\label{Sensitivity of S opt val case}{\em
Let the mapping $S$ \eqref{S(x)} be inner semicontinuous at $(\bar{x},\bar{y})\in{\rm gph}\,S$, while the set-valued map
$
\Xi(v):=\{(x,y)|\; g(x,y)\leq 0, \; f(x,y)-\varphi(x)\leq v\}
$
is calm at $(0, \bar{x}, \bar{y})$. Furthermore, if the MFCQ holds at $(\bar{x},\bar{y})$, then for all $y^*\in\mathbb{R}^m$,
\begin{equation}\label{cod-S}
D^*S(\bar{x}|\bar{y})(y^*) \subseteq   \underset{(\beta, \lambda)\in \bar{\Lambda} (\bar{x}, \bar{y}, y^*)}\bigcup \left\{\lambda \left(\nabla_x f(\bar{x}, \bar{y}) -  \bar{\partial} \varphi(\bar{x})\right) + \nabla_x g(\bar{x}, \bar{y})^\top\beta\right\}
\end{equation}
with $\bar{\Lambda} (\bar{x}, \bar{y}, y^*)$ collecting all $(\beta, \lambda)$ verifying $\lambda\geq 0$, $\beta \geq 0$, $\beta^\top g(\bar{x}, \bar{y})=0$ such that
$$
y^* + \lambda \nabla_y f(\bar{x}, \bar{y}) + \nabla_y g(\bar{x}, \bar{y})^\top \beta =0.
$$
Furthermore $S$ is Lipschitz-like around $(\bar x, \bar y)$ if in addition, it holds that
\begin{equation*}\label{A52}
\big[(\beta, \lambda)\in \bar{\Lambda} (\bar{x}, \bar{y}, 0), \,\,u\in \partial(-\varphi)(\bar{x})\big]\Longrightarrow \lambda u + \lambda
\nabla_x f(\bar{x}, \bar{y}) + \nabla_x g(\bar{x}, \bar{y})^\top \beta =0.
\end{equation*}
}\end{thm}
Recall that to get $\bar{\partial} \varphi(\bar{x})$ in \eqref{cod-S}, we can use \eqref{copartialphi} and \eqref{subphi-KM-II} for Theorems \ref{mekouna} and \ref{fonii}, respectively. For more details on this result, as well as examples where the assumptions hold, can be found in \cite{DempeMordukhovichZemkohoTwo-level}.

\section{Final discussion and future work}\label{Final discussion and future work}
Our focus in this paper was to provide upper estimates for the generalized Hessian of $\varphi$ under different scenarios. In some cases, these upper estimates may be substantially larger than the generalized Hessians being estimated. This is, for example, the case for the problem in Example \ref{ExampleRalph}, which leads to
\[
\begin{array}{l}
  \mbox{gph}\,\left(\partial \varphi\right) = A \cup B \cup C \\
  \mbox{with }\left\{\begin{array}{l}
                        A:=\left\{(x, y)\in \mathbb{R}^2\left|~x\leq 0, \;\; y=2(x-1)\right.\right\}, \\
                        B:=\left\{(x, y)\in \mathbb{R}^2\left|~x\geq 0, \;\; y=2(x+1)\right.\right\},\\
                        C:= \left\{0\right\}\times \left[-2, \;\; 2 \right].
                     \end{array}
   \right.
\end{array}
\]
Then based on partitions of the sets $A$, $B$, and $C$, we introduce the following sets:
\[
\begin{array}{l}
  \Delta_1 := \left\{\left(x, y\right)\in \mathbb{R}^2\left|~x<0, \;\;\; y=2(x-1)\right.\right\},\\
  \Delta_2 := \left\{\left(x, y\right)\in \mathbb{R}^2\left|~x>0, \;\;\; y=2(x+1)\right.\right\},\\
  \Delta_3 := \left\{\left(x, y\right)\in \mathbb{R}^2\left|~x=0, \;\;\; y=-2\right.\right\},\\
  \Delta_4 := \left\{\left(x, y\right)\in \mathbb{R}^2\left|~x=0, \;\;\; y=2\right.\right\},\\
  \Delta_5 := \left\{\left(x, y\right)\in \mathbb{R}^2\left|~x=0, \;\;\; y\in \left(-2, \;\,2\right)\right.\right\}.
\end{array}
\]
By basic calculations, see, e.g., \cite{{DempeZemkohoKKT-SIAM-paper2},Zemkoho2016Solving}, we can check that the Fr\'{e}chet normal cone to the graph of $\partial\varphi$ can be obtained as
\[
\hat{N}_{\mbox{gph}\left(\partial \varphi\right)}\left(\bar x, \bar y \right)
=\left\{
\begin{array}{lll}
\Omega_1 := \left\{\left(x, y\right)\in \mathbb{R}^2\left|x+2y =0\right.\right\}& \mbox{ if } &  (\bar x, \bar y)\in \Delta_1 \cup \Delta_2,\\
\Omega_2 := \left\{\left(x, y\right)\in \mathbb{R}^2\left|x+2y \geq 0, \;\, y\leq 0\right.\right\}& \mbox{ if } & (\bar x, \bar y)\in \Delta_3,\\
\Omega_3 := \left\{\left(x, y\right)\in \mathbb{R}^2\left|x+2y \leq 0, \;\, y\geq 0\right.\right\}& \mbox{ if } & (\bar x, \bar y)\in \Delta_4,\\
\Omega_4 := \mathbb{R}\times \{0\}& \mbox{ if } &  (\bar x, \bar y)\in \Delta_5.
\end{array}
 \right.
\]
It follows from this equality and considering the definition of the basic normal cone given in \eqref{basic normal cone}, we have
\[
N_{\mbox{gph}\left(\partial \varphi\right)}\left(\bar x, \bar y \right)
=\left\{
\begin{array}{lll}
\Omega_1 & \mbox{ if }               &  (\bar x, \bar y)\in \Delta_1 \cup \Delta_2,\\
\Omega_2 \cup \Omega_1 \cup \Omega_4 & \mbox{ if } & (\bar x, \bar y)\in \Delta_3,\\
\Omega_3 \cup \Omega_1 \cup \Omega_4 & \mbox{ if } & (\bar x, \bar y)\in \Delta_4,\\
\Omega_4  & \mbox{ if } &  (\bar x, \bar y)\in \Delta_5.
\end{array}
 \right.
\]
Hence, from the definition of the concept of coderivative, it holds that
\begin{equation}\label{SecondOrderPhi}
\partial^2 \varphi(\bar x|\bar y)(y^*)
=\left\{
\begin{array}{lll}
\{2y^*\} & \mbox{ if }               &  (\bar x, \bar y)\in \Delta_1 \cup \Delta_2, \;\; y^*\in \mathbb{R},\\
\mathbb{R} & \mbox{ if } & (\bar x, \bar y)\in \Delta_3, \;\; y^*=0,\\
\left[2y^*, \; \infty\right) & \mbox{ if } & (\bar x, \bar y)\in \Delta_3, \;\; y^* > 0,\\
\{2y^*\} & \mbox{ if } & (\bar x, \bar y)\in \Delta_3, \;\; y^*<0,\\
\mathbb{R} & \mbox{ if } & (\bar x, \bar y)\in \Delta_4, \;\; y^*=0,\\
\left(-\infty, \; 2y^*\right] & \mbox{ if } & (\bar x, \bar y)\in \Delta_4, \;\; y^* < 0,\\
\{2y^*\} & \mbox{ if } & (\bar x, \bar y)\in \Delta_4, \;\; y^*>0,\\
\mathbb{R}  & \mbox{ if } &  (\bar x, \bar y)\in \Delta_5, \;\; y^*=0,\\
\emptyset  & \mbox{ if } &  (\bar x, \bar y)\in \Delta_5, \;\; y^*\neq 0.
\end{array}
 \right.
\end{equation}

We can easily conclude from the formula \eqref{SecondOrderPhi} that for $\bar x<0$ and $\underline{x} = \bar x$, we have
\[
\partial^2 \varphi(\bar x|\underline{x})(\underline{x}^*)=\emptyset \;\; \mbox{ for any }\;\; \underline{x}^*\in \mathbb{R}.
\]
On the other hand, to apply the upper estimate in \eqref{Yespapa}, we can start by noting that we have $\Lambda(\bar x, \underline{x})=\left\{(0, \;\; 2(1-\bar x))\right\}$ for $\underline{x} = \bar x < 0$. Hence, it holds that
\[
\begin{array}{lll}
D^*\Lambda(\bar x, \underline{x}|  u)\left(\nabla_x g(\bar x, \underline{x})\underline{x}^*\right) & = & D^*\Lambda\left(\bar x, \underline{x}\left|\;0,\; 2(1-\bar x)\right.\right)\left(-\underline{x}^*, \underline{x}^*\right),\\[1ex]
  & = & \left\{(x^*, y^*)\in \mathbb{R}^2|\;\; x^* + y^* + 2\underline{x}^*\geq 0\right\},
\end{array}
\]
for $\underline{x}^*\in \mathbb{R}$, where we denote by $u :=(0, \;\; 2(1-\bar x))$ and $\nabla_x g(\bar x, \underline{x})\underline{x}^* := \left(-\underline{x}^*, \;\underline{x}^*\right)^\top$. Therefore, based on \eqref{Yespapa}, it follows that
\[
\partial^2 \varphi(\bar x|\underline{x})(\underline{x}^*)\subset \left\{\zeta^*_x + \zeta^*_y\left|\;\, \left(\zeta^*_x, \zeta^*_y\right)\in \mathbb{R}^2, \;\;\, \zeta^*_x + \zeta^*_y + 2\underline{x}^* \geq 0\right.\right\} \neq \emptyset.
\]
This confirms that in the case of this example, the upper estimate of $\partial^2 \varphi(\bar x|\underline{x})(\underline{x}^*)$ obtained from \eqref{Yespapa} is larger than the actual generalized Hessian of $\varphi$.

Another important observation from \eqref{SecondOrderPhi} is that for a specific structure of the functions $f$ and $g$ that define $\varphi$, exact formulas for $\partial^2 \varphi(\bar x|\underline{x})(\underline{x}^*)$ can be obtained. The generalization of such a calculation to broader classes of quadratic or linear versions of $f$ and $g$ will be carefully studied in a separate work. 

To conclude this section, we provide a flavor of how the results in this paper can be used in practice, by illustrating how they can be used in an approximation scheme to solve a bilevel optimization problem defined by
\begin{equation}\label{P}
\begin{array}{l}
   \underset{x,y}\min~F(x,y) \; \mbox{ s.t. }\;  G(x,y)\leq 0,\; y\in S(x):= \arg\underset{y}\min~\{f(x,y)\left|\; g(x,y)\leq 0\right.\},
\end{array}
\end{equation}
where $F,\; f :\mathbb{R}^n\times \mathbb{R}^m \rightarrow \mathbb{R}$, $G :\mathbb{R}^n\times \mathbb{R}^m \rightarrow \mathbb{R}^p$,  and $ g: \mathbb{R}^m \rightarrow \mathbb{R}^q$ are all continuously differentiable functions.  
Under the well-known partial calmness concept (see, e.g., \cite{mehlitz2021note, ye1995optimality}), a point that is locally optimal for problem \eqref{P} is also a local optimal solution for the partially penalized problem
\begin{equation}\label{Penalized-LLVF}
    \underset{x,y}\min~F(x,y) + \eta (f(x,y) -\varphi(x)) \;\mbox{ s.t. } \; G(x,y)\leq 0, \; g(x,y)\leq 0,
\end{equation}
for some penalization parameter $\eta  >0$.
Hence, for a given value of $\eta$, a point $(x, y)$ will be said to be Karush-Kuhn-Tucker (KKT)--stationary for problem \eqref{Penalized-LLVF} if there exists a Lagrange multiplier vector $(u, v)\in\mathbb{R}^{p+q}$  such that the condition
\begin{equation}\label{EquationToSolve}
0\in \Phi^\eta (\zeta):=\left[\begin{array}{c}
                                   H^\eta (\zeta)\\
                                   \phi_G(\zeta)\\
                                   \phi_g(\zeta)
                                 \end{array}
\right] + \bar\partial \varphi(x) \times \{0\}
\end{equation}
holds, with $\zeta:=\left(x, y, u, v \right)$ and the functions $H^\eta$, $ \phi_G$, and $ \phi_g$ respectively defined by
\[
\begin{array}{rll}
 H^\eta (\zeta) &:= & -\frac{1}{\eta}\left[\nabla F(x,y) + \nabla G(x, y)^{\top}u + \nabla g(x, y)^{\top}v\right] - \nabla f(x,y),\\
 \phi_G(\zeta)& := & \left(\min\{-G_i(x,y), \;\; u_i\}\right)_{i=1, \ldots, p},\\
 \phi_g(\zeta)& := & \left(\min\{-g_j(x,y), \;\; v_j\}\right)_{j=1, \ldots, q}.
\end{array}
\]
Considering recent developments in the construction of Newton-type methods for nonsmooth inclusions, using the Mordukhovich coderivative (see, e.g.,  \cite{khanh2020generalized, mordukhovich2020generalized, mordukhovich2020globally}), we can solve the following equation in $d$ at iteration $k$:
\begin{equation}\label{NewtonStep}
\Psi^{\eta, k} (d) = - \bar{\Phi}^\eta \left(\zeta^k, \underline{x}^k\right),
\end{equation}
where
\[
\begin{array}{l}
  \Psi^{\eta, k} (d) := \left[\begin{array}{c}
                                   \nabla  H^\eta (\zeta^k)\\
                                   a(\zeta^k)\\
                                   b(\zeta^k)
                                 \end{array}
   \right]d + \left[\begin{array}{c}
                                  c(\zeta^k)(d_x)\\
                                  0\\
                                   0\\
                                   0
                                 \end{array}
   \right] \;\; \mbox{ and } \;\; \bar{\Phi}^\eta  (\zeta^k, \underline{x}^k):=\left[\begin{array}{c}
                                   H^\eta (\zeta^k)\\
                                   \phi_G(\zeta^k)\\
                                   \phi_g(\zeta^k)
                                 \end{array}
\right] + \left[\begin{array}{c}
                                   \underline{x}^k\\
                                   0\\
                                   0\\
                                   0
                                 \end{array}
\right],\\[4ex]
   a(\zeta^k)\in \bar\partial \phi_G(\zeta^k), \;\;\; b(\zeta^k)\in \bar\partial \phi_g(\zeta^k),\;\;\;  c(\zeta^k)(d_x)\in \partial^2 \varphi\left(x^k|\underline{x}^k\right)(d_x),\\[2ex]
   \underline{x}^k \in \bar\partial \varphi(x^k) \;\; \mbox{ such that }\;\; H^\eta_x(\zeta^k) + \underline{x}^k=0 \;\; \mbox{ with } \;\; H^\eta_x(\zeta^k)\;\; \mbox{ defined by}\\[1ex]
   H^\eta_x(\zeta^k):=-\frac{1}{\eta}\left[\nabla_x F(x^k,y^k) + \nabla_x G(x^k, y^k)^{\top}u^k + \nabla_x g(x^k, y^k)^{\top}v^k\right] - \nabla_x f(x^k, y^k).
\end{array}
\]
After computing $d:=d^k$ from the generalized Newton step from \eqref{NewtonStep}, a new iterate, as usual, can be obtained as
\begin{equation}\label{NewIter}
\zeta^{k+1}= \zeta^k + d^k.
\end{equation}
As it is well-known how to calculate elements from $\bar\partial \phi_G(\zeta^k)$ and $\bar\partial \phi_g(\zeta^k)$, the first main question here is how to calculate an element from $\partial^2 \varphi\left(x^k|\underline{x}^k\right)(d_x)$. In the case where it is not possible to have an exact formula for this generalized Hessian as in the example above, see \eqref{SecondOrderPhi}, elements from the upper estimates obtained in the fourth and fifth sections of this paper could be used for a heuristic scheme to solve inclusion \eqref{EquationToSolve}. A separate work is ongoing to study the theoretical properties of the generalized Newton scheme \eqref{NewtonStep}--\eqref{NewIter} in the case where an exact formula for $\partial^2 \varphi\left(x^k|\underline{x}^k\right)(d_x)$ can be obtained, and also evaluate the practical efficiency in the case where only upper bounds of this generalized Hessian can be estimated. \\[1ex]

\textbf{Acknowledgements.} The author would like to thank the anonymous referees for their constructive feedback that led to improvements in the paper.
\bibliographystyle{plain}

\end{document}